\documentclass[11pt]{article}
\usepackage{euscript,yfonts,dsfont,textcomp,amsmath,amssymb,amstext,parskip,url,graphicx,tikz,float,amsthm}
\usepackage{enumitem}
\usepackage{picins}

\usepackage{algorithm}
\usepackage{algorithmic}
\usepackage{amsfonts}
\usepackage{booktabs} 
\usepackage{xspace}
\usepackage{multirow}
\usepackage{tabularx,bbm}
\usepackage{xcolor}

\usepackage[colorlinks=true,
            linkcolor=blue,
            citecolor=blue,
            urlcolor=blue]{hyperref}
        
\usepackage[margin=1in]{geometry}

\newcommand{\calV}{{\mathcal V}}
\newcommand{\calE}{{\mathcal E}}
\newcommand{\calG}{{\mathcal G}}
\newcommand{\calL}{{\mathcal L}}

\newcommand{\calT}{{\mathcal T}}
\newcommand{\calP}{{\mathcal P}}
\newcommand{\calX}{{\mathcal X}}

\newcommand{\calM}{{\mathcal M}}
\newcommand{\calW}{{\mathcal W}} 
\newcommand{\A}{\mathcal A}
\newcommand{\D}{\mathcal D}
\newcommand{\R}{\mathbb R}

\newcommand{\bK}{{\mathbf K}}
\newcommand{\bp}{{\mathbf p}}

\newcommand{\bu}{{\mathbf u}}
\newcommand{\bv}{{\mathbf v}}

\newcommand{\bw}{{\mathbf w}}
\newcommand{\ones}{{\mathbbm 1}}

\DeclareMathOperator*{\argmax}{arg\,max}
\DeclareMathOperator*{\argmin}{arg\,min}

\DeclareMathOperator*{\supp}{Supp}

\newcommand{\black}{\color{black}}

\newtheorem{theorem}{Theorem}

\newtheorem{problem}{Problem}
\newtheorem{remark}{Remark}

\newtheorem{lemma}{Lemma}
\newtheorem{definition}{Definition}
\newtheorem{proposition}{Proposition}

\makeatletter
\newenvironment{customproblem}[1]
  {\renewcommand{\theproblem}{#1}%
   \begin{problem}}
  {\end{problem}}
\makeatother

\newenvironment{keywords}{%
\vspace{1ex}\noindent\small
\textbf{Keywords:}\ }%
{\par\vspace{1ex}}

\title{\Large \bfseries Temporally Flexible Transport Scheduling on Networks with Departure-Arrival Constriction and Nodal Capacity Limits}

\author{%
Anqi Dong\thanks{Division of Decision and Control Systems and Department of Mathematics, KTH Royal Institute of Technology, SE-100 44 Stockholm, Sweden (\texttt{anqid@kth.se}).}%
\and Karl H. Johansson\thanks{Division of Decision and Control Systems and Digital Futures, KTH Royal Institute of Technology, SE-100 44 Stockholm, Sweden (\texttt{kallej@kth.se}).}%
\and Johan Karlsson\thanks{Department of Mathematics and Digital Futures, KTH Royal Institute of Technology, SE-100 44 Stockholm, Sweden (\texttt{johan.karlsson@math.kth.se}).}%
}

\begin{document}

\maketitle

\begin{abstract}
We investigate the optimal transport (OT) problem over networks, wherein supply and demand are conceptualized as temporal marginals governing departure rates of particles from source nodes and arrival rates at sink nodes. This setting extends the classical OT framework, where all mass is conventionally assumed to depart at $t = 0$ and arrive at $t = t_f$. Our generalization accommodates departures and arrivals at specified times, referred as \emph{departure--arrival} (DA) constraints. In particular, we impose nodal-temporal flux constraints at source and sink nodes, characterizing two distinct scenarios: (i) Independent DA constraints, where departure and arrival rates are prescribed independently, and (ii) Coupled DA constraints, where each particle’s transportation time span is explicitly specified. We establish that OT with independent DA constraints admits a multi-marginal optimal transport formulation, while the coupled DA case aligns with the unequal-dimensional OT framework. For line graphs, we analyze the existence and uniqueness of the solution path. For general graphs, we use a constructive path-based reduction and optimize over a prescribed set of paths.  From a computational perspective, we consider entropic regularization of the original problem to efficiently provide solutions based on multi-marginal Sinkhorn method, making use of the graphical structure of the cost to further improve scalability. Our numerical simulation further illustrates the linear convergence rate in terms of marginal violation. 
\end{abstract}

\begin{keywords}                                
Optimal transport, graphical transportation, departure--arrival constraint, temporal flexibility, multi--marginal Sinkhorn algorithm
\end{keywords}

\section{Introduction}

Network flow problems aim to optimize the movement of commodities across a network while satisfying flow capacity and conservation constraints. These models have broad applications in mathematics, computer science, and transportation optimization~\cite{ahuja1988network}, with notable examples including the \emph{maximum-flow} and the \emph{minimum-cost} flow problems~\cite{dantzig2016linear}. The latter focuses on efficiently transporting goods or allocating resources (for instance, bandwidth and power) at minimal cost and plays a central role in the distribution of goods, information, and platoons~\cite{ekren2018constrained,haasler2024scalable}. In most of these classical formulations, time either does not appear explicitly or enters only through steady-state capacities rather than as a variable that can be controlled.

In many networked logistics and service systems, however, time is the primary degree of freedom. Control here is, in essence, a schedule: a measured release of mass, orchestrated in time, that guides it through junctions and grants it passage downstream. In port drayage, cranes, yard blocks, and gates operate under tight time-varying capacity. Export cutoffs couple departures to required arrivals, so planning becomes a joint choice of path and crossing times that keeps equipment busy without queues~\cite{steenken2004container}. In urban rail and bus transit, terminal departures and arrivals are prescribed, while headway and conflict windows impose capacity at stations and junctions. Operations hinge on pacing trains through dwell and merges, so that the timetable remains feasible~\cite{cacchiani2012nominal}. In data-center service chains, each request traverses firewall, cache, and inference under per-function throughput and end-to-end latency targets. Admission and timing at each hop prevent overload and deliver service objectives~\cite{mijumbi2015network}. These examples motivate a framework in which network transport cost, time-varying capacities, and departure–arrival specifications are unified in a \emph{scheduling-and-control} viewpoint.

The \emph{optimal transport} (OT) problem, originally proposed by Monge~\cite{monge1781memoire}, later formalized by Kantorovich~\cite{kantorovich1942translocation}, and more recently extended to dynamic formulations~\cite{benamou2000computational}, has attracted significant attention across mathematics~\cite{villani2009optimal}, control~\cite{chen2016optimal,chen2021optimal,eldesoukey2024schrodinger}, and computer science~\cite{peyre2019computational}. In particular, one considers two probability measures $\mu$ and $\nu$ and seeks a transportation plan that transfers mass from the former to the latter while minimizing a prescribed cost. The source distribution $\mu$ represents the initial mass configuration, and the target distribution $\nu$ specifies the demand.

The \emph{Monge} formulation~\cite{monge1781memoire}, with a quadratic cost in $\mathbb{R}^n$, seeks a transport map $T: x \mapsto y = T(x)$ that minimizes the cost functional
\begin{align}
J(T) := \int_{\mathbb{R}^n} \|x-T(x)\|_2^2 \; \mu(dx),    
\end{align}
among admissible maps $T$ that push $\mu$ onto $\nu$. However, such a map may not always exist. The \emph{Kantorovich} relaxation~\cite{kantorovich1942translocation} reformulates the problem as an optimization over couplings between $\mu$ and $\nu$. Instead of a deterministic transport map, it seeks a probability measure $\pi$ on $\mathbb{R}^n \times \mathbb{R}^n$ that has $\mu$ and $\nu$ as its marginals and minimizes
\begin{align}
J(\pi) := \int_{\mathbb{R}^n \times \mathbb{R}^n} \|x-y\|_2^2 \; \pi(dx, dy).  
\end{align}
This static view already captures the geometry of redistributing mass. Dynamic formulations interpret the same problem as continuous-time motion that moves $\mu$ to $\nu$ over a finite time horizon~\cite{benamou2000computational}, making the role of timing more explicit.

A natural extension of OT relaxes the marginal constraints in the Kantorovich formulation and instead imposes upper and lower bounds on the marginal distributions or their cumulative functions~\cite{rachev2006mass}.
Similarly, constraints can be imposed on the coupling itself, yielding a bi-marginal constraint of the form $\underline{r} \leq \pi(dx,dy) \leq \overline{r}$.
Physical constraints along transport paths naturally arise in applications. Moment-type constraints were introduced in~\cite[Section 4.6.3]{rachev1998mass} and later generalized in~\cite{ekren2018constrained}. Congestion effects as well as origin destination constraints, which can impede transport, have also been studied \cite{haasler2020optimal,haasler2024scalable}.
 A path-dependent cost to mitigate congestion was considered in~\cite{carlier2008optimal}, while~\cite[Section 4]{santambrogio2015optimal} provides a comprehensive review of this topic. Constraints on probability densities have also been explored to model capacity limitations of transportation media~\cite{korman2013insights} or dynamical flow constraints~\cite{gladbach2022limits}. In addition, our recent work~\cite{stephanovitch2024optimal} focus on a specific \emph{flow-rate} (also, \emph{momentum}) constraint imposed on momentum in a dynamic OT formulation,
$
\underline{r} \leq \rho(t, x)v(t, x) \leq \overline{r},
$
which models a toll station. The same flow-rate bound was then reformulated within the multi-marginal Kantorovich framework~\cite{dong2024monge} by introducing a “crossing time’’ as an additional marginal over the time variable, thereby the coupling jointly encodes where mass travels and when it passes each node.


The present work builds on this line of thought and brings it into a network setting where time is the control input. We tie optimization and control by taking the schedule as the decision. Path-wise couplings and crossing-time profiles are chosen to minimize transport cost under marginal and capacity constraints. A schedule is a distribution of crossing times along each path. Boundary time profiles act as release and acceptance laws at the source and the sink. At each interior node, a single crossing-time profile meters flow and enforces conservation, since the same time index labels both incoming and outgoing mass under the capacity limit. From a network optimization perspective, our formulation can be viewed as minimum-cost flow problem with time flexibility. Classical minimum-cost flow fixes edge capacities and costs and optimizes how much mass uses each arc, while time either does not appear or is treated as a static resource. Here, the network and total transported mass are fixed, and time itself becomes control variable. The decision is when mass is allowed to traverse each node and edge, under explicit departure–arrival profiles and nodal throughput limits. In this sense, we lift minimum-cost flow from a static allocation problem to a path-wise scheduling problem while remaining within an optimal transport framework.

Herein, we cast optimal transport on networks as a \emph{scheduling-and-control} problem with time as the actuator. The framework is path-wise. Flows are represented by multi-dimensional couplings whose marginals encode source and sink distributions together with crossing-time profiles at interior nodes. In this way, release curves, delivery deadlines, and per-node throughput constraints are handled within a single optimal transport formulation rather than through ad hoc scheduling rules. To the best of the authors’ knowledge, this is the first framework that integrates departure–arrival specifications and time-varying nodal capacities into a path-wise optimal transport model on networks, provides structural conditions for well-posedness, and propose scalable \emph{Schr\"odinger-Fortet-Sinkhorn} algorithm. The result is a unified description of network flow, timing, and capacity that fits naturally with boundary control, state constraints, and receding-horizon updates. We impose departure–arrival (DA) rationales in two regimes:
\begin{enumerate}
\item[i)] \emph{Independent DA constraints:} departure and arrival rates are prescribed separately as boundary controls. The schedule reshapes crossing times in order to satisfy both boundary laws and interior capacity limits. This setting is developed in Section~\ref{sec:ind} through Problems~{\bf \ref{prob:problem1}} and~\ref{prob:prob1-prime} and leads to a multi-marginal optimal transport formulation with an additional time marginal at each node.
\item[ii)] \emph{Coupled DA constraints:} each particle’s transport duration is fixed, so the controller chooses a start time and the arrival time then follows deterministically. This regime is formulated in Section~\ref{sec:couple} as Problem~{\bf \ref{prob:prob2}} and \ref{prob:prob2-prime}. It yields an unequal-dimensional optimal transport problem with a hard link between release and delivery, suited to timetable and deadline-driven settings.
\end{enumerate}
Nodal flow-rate constraints act as state and input limits along each path and are enforced through cost terms and inequality constraints on crossing times. On line graphs, we provide feasibility and existence guarantees in Lemma~\ref{lemma:feasible-order} and Proposition~\ref{prop:existence}, and uniqueness guarantees in Theorems~\ref{thm:unique1} and~\ref{thm:unique1-prime}. In the independent case, the cost satisfies a generalized Monge condition, while in the coupled case, it satisfies an $x$-twist condition as shown in Proposition~\ref{prop:pureness} and Theorem~\ref{thm:unique-coupled-line}. These properties identify when a well-posed schedule exists and when it is uniquely determined by the DA and capacity specifications, and form the analytical backbone for the path-wise generalization and the entropic Sinkhorn-type algorithms that we develop later for large network instances.

For general graphs, we introduce a node-aggregation reduction that preserves the transport dynamics while simplifying the topology. Nodes are collapsed into effective groups so that DA-constrained transport on large graphs can be analyzed and computed through lower-dimensional surrogates without losing the structure of capacities and schedules, as formalized in Problem~\ref{prob:problem3}. This reduction keeps path-wise timing, nodal capacities, and DA specifications intact and therefore serves as a bridge between realistic network layouts and tractable line-graph models. On top of this reduction, we develop scalable algorithms based on entropic regularization and a graph-structured Sinkhorn scheme with shared nodal multipliers, summarized in Algorithm~\ref{alg:sinkhorn_graph}. The shared multipliers act as time-indexed node prices that coordinate flow across all paths passing through a node and enforce both boundary and capacity constraints. The resulting alternating projection scheme performs explicit projections onto departure and arrival profiles and onto nodal throughput bounds, enabling large-scale structured transport on networks with explicit DA specifications and node-wise capacity limits. This scheme's linear convergence rate is established in Theorem~\ref{thm:convergence} and confirmed by numerical experiments in Section~\ref{sec:num}.

The rest of the paper is organized as follows. Section~\ref{sec:prelim} formalizes the problem setup. Section~\ref{sec:ind} discusses the case where DA constraints are independent, and Section~\ref{sec:couple} considers the coupled DA setting. In Section~\ref{sec:general}, we extend the problem from single-source and single-sink transportation over line graphs to general graphs and present node aggregation methods and a Sinkhorn algorithm tailored to the structure of the cost functionals. Numerical simulations are presented in Section~\ref{sec:num}. Finally, Section~\ref{sec:conclusion} revisits the scheduling viewpoint, summarizes our theoretical and algorithmic contributions.

\section{Problem setup and background}\label{sec:prelim}

We start with a recap of optimal transport, focusing on its extension from bi-marginal to multi-marginal formulations, marginal- and coupling-wise relaxations, and the incorporation of physical constraints. We then specialize to network-based optimal transport, which will serve as the reference point for the temporally flexible formulations in Sections~\ref{sec:ind} and~\ref{sec:couple}.

\subsection{Multi-marginal optimal transport}

\emph{Multi-marginal optimal transport} (MMOT) generalizes the classical optimal transport framework to settings with more than two marginals, and seeks a joint distribution that minimizes a high-dimensional cost while matching given marginals. Key challenges include establishing the existence and uniqueness of solutions for general transport costs. See, for instance, \cite{chiappori2016multidimensional,kim2014general,pass2015multi}.

Given compactly supported non-negative measures $\mu^0, \mu^1, \dots, \mu^{\calT}$ on spaces $\mathbb R^n$ and a continuous cost function $c(x_0,x_1,\dots,x_\calT)$, the Monge formulation of MMOT seeks transport maps $T_{\ell}: \R^n \to \R^n$ for all $\ell = 1, \dots, \calT$ that minimize
\begin{align*}
\int_{\R^n} c\big(x_0,T_1(x_0),\dots,T_\calT(x_0)\big)\, d\mu^0(x_0),
\end{align*}
subject to $T_\ell{}_{\sharp}\mu^0 = \mu^\ell$ for $\ell = 1,\dots,\calT$. As usual, the push-forward notation $T_\ell{}_{\sharp}\mu^0 = \mu^\ell$ means that for any Borel set $S \subset \R^n$, $\mu^0\big(T_\ell^{-1}(S)\big) = \mu^\ell(S)$. As in the bi-marginal case, a Monge solution may fail to exist.

The Kantorovich formulation of MMOT replaces transport maps by couplings and minimizes
\begin{align}\label{eq:multi-marginal-obj}
\langle c,\pi \rangle
:= \int_{\R^n\times \dots\times \R^n}
c(x_0,\dots,x_\calT)\, d\pi (x_0,\dots,x_\calT),
\end{align}
over the set of admissible couplings
\begin{align}\label{eq:multi-marginal-set}
\Pi(\mu^0,\dots,\mu^\calT)
=\Big \{\pi \;\Big|\; P_\ell(\pi) = \mu^\ell,\ \ell = 0,1,\dots,\calT \Big\},    
\end{align}
and $P_\ell(\pi)$ denotes the projection of $\pi$ onto its $\ell$-th marginal, i.e.,
\begin{align*}
P_\ell(\pi)
:= \int_{\R^n\times \dots \times\R^n}
d\pi(x_0,\dots,x_{\ell-1},x_\ell,x_{\ell+1},\dots,x_{\calT}),
\end{align*}
where the iterated integral is taken over all variables $x_k$ with $k\neq \ell$.


\subsection{Conditions for uniqueness of the solution}
We next recall classical conditions that guarantee the uniqueness of the optimal coupling in the bi-marginal setting, and then their multi-marginal analogue.

\begin{definition}[{\it \bf Monge condition}~\cite{rachev1998mass}]\label{def:monge}
A cost function $c:\mathbb{R}^2 \to \mathbb{R}$ is said to satisfy the Monge condition\footnote{Also, the Spence--Mirrlees condition in economics \cite{chiappori2016multidimensional,santambrogio2015optimal}} if, for real variables $x,y \in \mathbb{R}$, its cross difference
$$
\Delta_{(x,x')}^{(y,y')}(c)
:= c(x,y) + c(x',y') - c(x,y') - c(x',y)
$$
satisfies $\Delta_{(x,x')}^{(y,y')}(c) \leq 0$ for all $x' \geq x$ and $y' \geq y$.
\end{definition}


If $c(x,y)$ is quasi-antitone, in the sense that $\Delta_{(x,x')}^{(y,y')}(c) \leq 0$, or quasi-monotone, in the sense that $\Delta_{(x,x')}^{(y,y')}(c) \geq 0$, then the minimizer of the transport problem with cost $c$ is unique, see \cite{cambanis1976inequalities} and \cite[Theorem 3.1.2]{rachev1998mass}. In matching theory, the cross-difference condition expresses that, for given matching pairs $(x,y)$ and $(x', y')$, swapping partners to $(x,y')$ and $(x',y)$ cannot reduce the total cost. This rules out profitable swaps and leads to an optimal and stable matching. The link between cross differences and second derivatives is made precise through a Taylor expansion argument, as discussed in \cite{cambanis1976inequalities,carlier2003class,chiappori2016multidimensional} and extended to higher dimensions in \cite{pass2012local}.

\begin{definition}[{\it \bf Twist condition}~\cite{villani2009optimal}]
Let $c(x,y)\in C^1$. The cost $c$ is said to satisfy the twist condition if, for each fixed $x$, the map $y \mapsto \nabla_x c(x,y)$ is injective, and for each fixed $y$, the map $x \mapsto \nabla_y c(x,y)$ is injective. Equivalently, for all distinct pairs $y \neq y'$ and $x \neq x'$, one has
$D_x c(x, y) \neq D_x c(x, y')$ and 
$D_y c(x, y) \neq D_y c(x', y)$.
\end{definition}
In other words, the gradient $\nabla_x c(x,y)$ uniquely determines $y$ and $\nabla_y c(x,y)$ uniquely determines $x$. This implies that optimal couplings are induced by transport maps and are therefore unique. The twist condition is thus a convenient sufficient criterion for the uniqueness of the optimal transport plan.

In the multi-marginal setting, an analogous structure is obtained by imposing a Monge-type condition on all two-marginal projections of the cost.

\begin{definition}[{\it \bf Generalized Monge condition} \cite{rachev1998mass}]\label{def:genmonge}
Consider a cost functional $c:\mathbb{R}^n \to \mathbb{R}$ of the form $c(x_1,\dots,x_n)$, where $(x_1,\dots,x_n)$ lie in the support of a measure on $\calX^n$ with $\calX \subset \mathbb{R}$. The cost $c$ is said to satisfy the generalized Monge condition if, for any pair of indices $1 \leq i < j \leq n$ and all $x_i \leq x_i'$ and $x_j \leq x_j'$, one has $c(x_i,x_j) + c(x_i',x_j')
\leq c(x_i,x_j') + c(x_i',x_j)$,
with all other coordinates held fixed.
\end{definition}

Under the generalized Monge condition, one can show that, for each marginal $\mu_i$, the optimal coupling takes the form
\begin{align*}
\pi = \big(T^1,\dots,T^{i-1},\mathrm{Id},T^{i+1},\dots,T^n\big)_{\sharp}\mu_i,
\end{align*}
where each transport map $T^j$ is unique and monotone for all $j = 1,\dots,n$, and this representation holds for every $i = 1,\dots,n$. Hence, the joint coupling $\pi$ is uniquely determined across all marginals. This generalized Monge structure will reappear later when we establish uniqueness in our one-dimensional line-graph setting.

\subsection{Problem: optimal transport on networks}

We established departure--arrival constrained models over a graph based on the Kantorovich formulation. Let $\calG=(\calV,\calE,\calW)$ be a directed graph, where $\calV$ is the node set, $\calE$ the directed edge set, and $\calW$ the collection of edge weights. Each edge $e\in\calE$ carries a weight $w(e)$ that represents the cost (or, distance) of transporting one unit of mass along that edge. A subset $\calV^- \subseteq \calV$ collects source nodes and a subset $\calV^+ \subseteq \calV$ collects sink nodes.

Discrete source and sink distributions are given by nonnegative functions $\mu$ and $\nu$ on $\calV$, supported on $\calV^-$ and $\calV^+$ respectively. We assume that total supply matches total demand, i.e.,
$\sum_{v\in\calV}\mu(v)=\sum_{v\in\calV}\nu(v)$.
A transportation plan is a nonnegative function $\pi$ on $\calV\times\calV$, where $\pi(v_i,v_j)$ specifies how much mass is sent from $v_i$ to $v_j$ and consistency with marginals requires $\sum_{v_i\in\calV}\pi(v_i,v_j)=\nu(v_j)$ and $\sum_{v_j\in\calV}\pi(v_i,v_j)=\mu(v_i)$.
The transportation cost is encoded by a function $c:\calV\times\calV\to\mathbb R$, typically induced by the graph geometry, for instance by the length or travel time of the shortest path from $v_i$ to $v_j$ in $\calG$. The network-based optimal transport problem then minimizes
$\sum_{v_i,v_j\in\calV} c(v_i,v_j)\,\pi(v_i,v_j)$,
over all plans $\pi$ that satisfy the marginal constraints. This coincides with a minimum-cost flow problem \cite{ahuja1988network} on $\calG$ with node supplies $\mu$ and demands $\nu$. In this static setting, time does not appear explicitly. In the next sections, we enrich the model by attaching time coordinates to paths, so that departures, crossings, and arrivals become time profiles rather than instantaneous transfers, and by imposing capacity constraints at intermediate nodes together with departure–arrival specifications at the boundary.

\section{Independent DA constraints: problem formulation and uniqueness}\label{sec:ind}
In this section, we specialize to a path graph and make time explicit. We consider a graph $\calG = (\calV,\calE,\calW)$ whose vertices are ordered as
$\bp = \{v_0,v_1,\dots,v_{\calT-1},v_{\calT}\}$,
with $v_0$ the unique source and $v_{\calT}$ the unique sink. Each edge $e = (v_{i-1},v_i) \in \calE$ is assigned with a positive weight $w(v_{i-1},v_i) > 0$. We assume that one unit of mass is dispatched from $v_0$ and must reach $v_{\calT}$, so that in the static setting, transport takes place along the single path $\bp$ and reduces to a shortest-path problem. We consider a departure-arrival over a line graph, in which departures from $v_0$, crossings at the intermediate nodes $v_1,\dots,v_{\calT-1}$, and arrivals at $v_{\calT}$ are described by time profiles subject to departure–arrival specifications and nodal capacity bounds.

The mass flow is governed by departure and arrival rate constraints. The departure rate $\mu^0(t)$ describes the mass density over time at which mass leaves the source node $v_0$, so that $\int_t^{t+dt }\mu^0(\tau)\,d\tau$ is the amount departing in the interval $[t,t+dt]$. Similarly, the arrival rate $\mu^\calT(t)$ describes the mass density over time at which mass reaches the sink node $v_\calT$, with $\mu^\calT(t)\,dt$ the amount arriving in $[t,t+dt]$. In addition to these boundary specifications, the flow is subject to rate bounds at each intermediate node along the path,
\[
\sigma^{i}(t) \leq r(v_i),
\qquad i = 1,\dots,\calT-1,
\]
where $\sigma^{i}(t)$ denotes the instantaneous rate of mass crossing node $v_i$. These constraints ensure that the throughput at each intermediate node does not exceed the prescribed flux limit, reflecting physical or logistical capacity. An illustration is given in Fig.~\ref{fig:single-path}.

\begin{figure}[htb!]
\centering
\includegraphics[width=.7\linewidth]{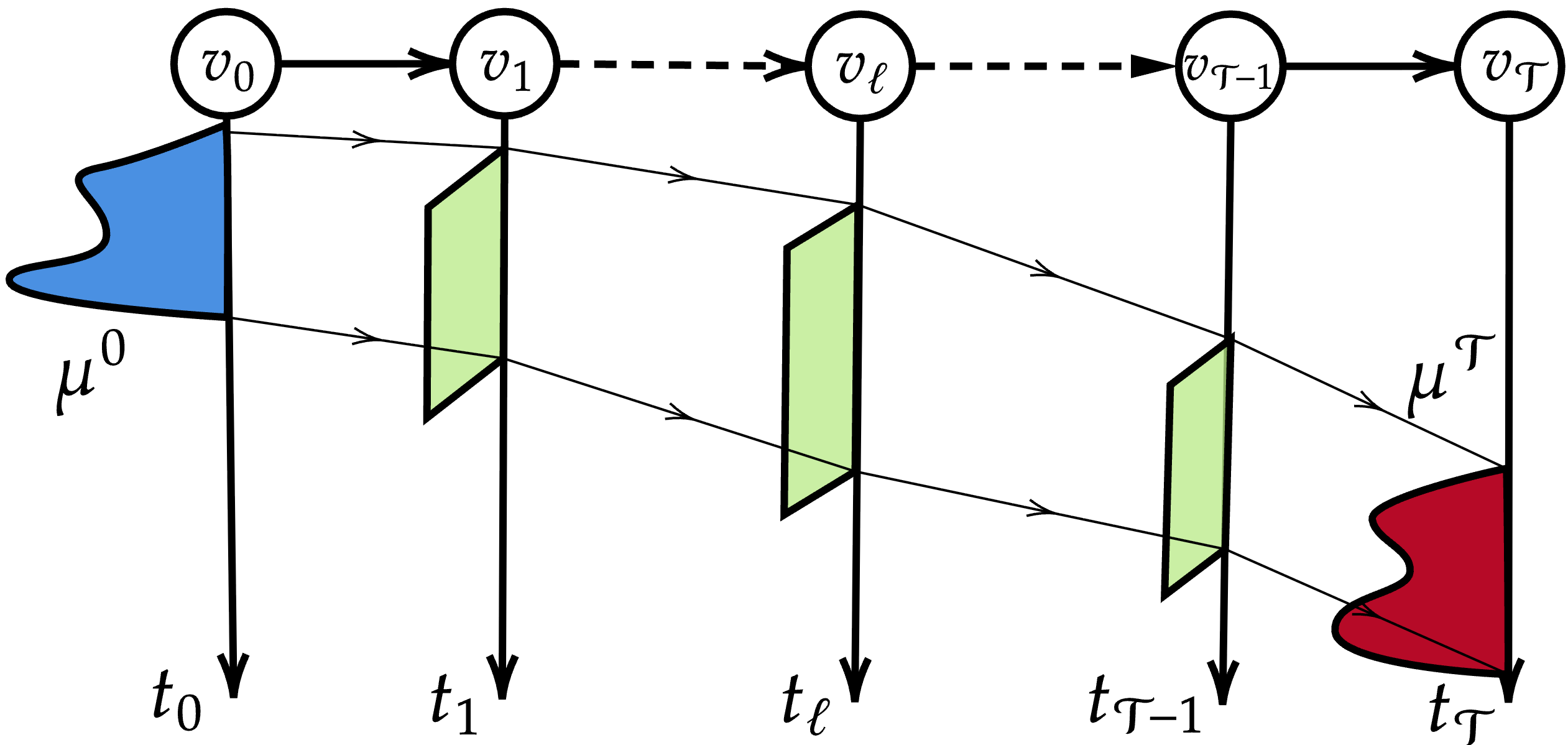}
\caption{Mass flow along the line graph $\bp=\{v_0,v_1,\dots,v_{\calT-1},v_\calT\}$. Each particle departs from $v_0$ according to the departure rate $\mu^0$, travels sequentially through intermediate nodes $v_1, v_2,\dots,v_{\calT-1}$ abides flow-rate constraints at each node, and arrives at the sink $v_\calT$ according to the arrival rate $\mu^\calT$.}
\label{fig:single-path}
\end{figure}

Independent DA constraints refer to the case where the departure distribution $\mu^0(t)$ and the arrival distribution $\mu^\calT(t)$ are prescribed separately, without specifying how individual departures are paired with arrivals. Our first step is to formulate the scheduling problem on a single intermediate node. This will serve as the basic building block for the multi-node line graph. We consider a single toll located at $v_1$, which imposes a flow-rate constraint $\sigma^1(t) \leq r$ with threshold $r>0$. The times at which mass departs from $v_0$, crosses $v_1$, and arrives at $v_2$ are denoted by $t_0$, $t_1$, and $t_2$, respectively, and the cost of transporting mass along the two segments is modeled by
\begin{align}\label{eq:cost-function-1}
c(t_0, t_1, t_2)
=
\frac{w(v_0, v_1)}{t_1 - t_0}
+
\frac{w(v_1, v_2)}{t_2 - t_1}.
\end{align}
The first term measures the transit cost from $v_0$ to $v_1$ over the time interval $t_1 - t_0$, and the second term the cost from $v_1$ to $v_\calT$ over $t_2 - t_1$. By optimizing this cost under the DA and capacity constraints, we obtain a temporally feasible transport plan that respects both spatial and temporal structure, as formalized below.

\begin{problem}\label{prob:problem1}
{\bf \textit{(Independent DA with single intermediate node)}} Given probability measures $\mu^0(t_0)$ and $\mu^\calT(t_2)$, along with a flow-rate bound $r > 0$, determine a probability measure $\pi(t_0,t_1,t_2)$ on $\mathcal{M}_+([0, t_f]^3)$ that minimizes
\begin{subequations}
\begin{align}\label{eq:obj1}
\iiint_{t_0,t_1,t_2} c(t_0, t_1, t_2) \,\pi(dt_0,dt_1,dt_2),
\end{align}
subject to the departure and arrival constraints
\begin{align}
&\iint_{t_1,t_2} \pi(dt_0,dt_1,dt_2) = \mu^0(t_0),\\
&\iint_{t_0,t_1} \pi(dt_0,dt_1,dt_2) = \mu^\calT(t_2),
\end{align}
and the flow-rate constraint
{\setlength{\abovedisplayskip}{7pt}%
 \setlength{\belowdisplayskip}{7pt}%
\begin{align}
\iint_{t_0,t_2} \pi(dt_0,dt_1,dt_2) := \sigma^1(dt_1) \leq r dt_1.
\end{align}}%
\end{subequations}
\end{problem}
In what follows, we first characterize when a feasible schedule exists (in terms of an ordering relation between $\mu^0$ and $\mu^\calT$), then establish the existence of minimizers for suitable bounds $r$, and finally prove uniqueness of the optimal coupling by exploiting the Monge and generalized Monge structures introduced in Section~\ref{sec:prelim}.

\subsection{Feasibility}

We first characterize when a pair of independent DA profiles is compatible with a minimum travel time.

\begin{lemma}[\bf \textit{Feasibility of independent DA}]\label{lemma:feasible-order}
With fixed minimum travel time $\Delta>0$, a pair $(\mu^0,\mu^{\calT})$ on $[0,t_f]$ with
$\int_0^{t_f}\mu^0(dt_0)=\int_0^{t_f}\mu^{\calT}(dt_{\calT})=1$
is feasible if and only if $\mu^0$ is dominated by the $\Delta$-shift of $\mu^{\calT}$ in first-order (stochastic) order~\cite[Chapter 1]{shaked2007stochastic}, that is,
\[
F_{0}(t)\ \ge\ F_{\calT}(t+\Delta)\quad\text{for all } \;\; t\in[0,t_f],
\]
where $F_{0}(t)=\int_0^{t}\mu^0(ds)$ and $F_{\calT}(t)=\int_0^{t}\mu^{\calT}(d\tau)$. Equivalently, for every bounded increasing test function $\phi\colon\mathbb R\to\mathbb R$, we have
\[
\int_0^{t_f}\phi(t)\,\mu^0(dt)\ \le\ \int_0^{t_f}\phi(t-\Delta)\,\mu^{\calT}(dt),
\]
with $\phi(t-\Delta)$ understood as $\phi(0)$ when $t<\Delta$ \cite[Eq. 1.A.7]{shaked2007stochastic}. In particular, we have\footnote{We write $\supp(\cdot)$ for the support of a measure.}
\begin{align*}
\begin{aligned}
\inf\supp(\mu^{\calT})\ &\ge\ \inf\supp(\mu^0)+\Delta,\\
\sup\supp(\mu^{\calT})\ &\ge\ \sup\supp(\mu^0)+\Delta.    
\end{aligned}   
\end{align*}
\end{lemma}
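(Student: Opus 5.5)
We read feasibility as the existence of a coupling $\gamma\in\Pi(\mu^0,\mu^{\calT})$ of departure and arrival times that gives every particle at least the minimum travel time, i.e.\ $\gamma(\{(t_0,t_{\calT}) : t_{\calT}\ge t_0+\Delta\})=1$. The nodal rate bound $r$ is set aside here; it enters only in the existence result that follows. With this reading the lemma is a one-dimensional instance of Strassen's theorem for first-order stochastic dominance. We plan to prove it directly with the quantile (monotone) coupling, which is also the coupling singled out by the Monge condition of Definition~\ref{def:monge}. We use the convention $F_0(s)=0$ for $s<0$, which matches the convention $\phi(t-\Delta)=\phi(0)$ for $t<\Delta$ in the statement.

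\emph{Necessity.} Let $\gamma$ be a feasible coupling. Since $t_{\calT}\ge t_0+\Delta$ holds $\gamma$-a.s., the event $\{t_{\calT}\le t+\Delta\}$ is contained in $\{t_0\le t\}$ up to a null set. Taking $\gamma$-measures gives $F_{\calT}(t+\Delta)\le F_0(t)$ for every $t$. For the test-function form, take $\phi$ bounded and increasing. Then $\phi(t_0)\le\phi(t_{\calT}-\Delta)$ $\gamma$-a.s., and integrating against $\gamma$ gives the stated inequality.

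\emph{Sufficiency.} Define the generalized inverses $F_0^{-1}(u)=\inf\{s: F_0(s)\ge u\}$ and $F_{\calT}^{-1}(u)=\inf\{s: F_{\calT}(s)\ge u\}$ for $u\in(0,1]$. Let $U$ be uniform on $(0,1)$ and set $\gamma=\mathrm{Law}\big(F_0^{-1}(U),F_{\calT}^{-1}(U)\big)$; this has marginals $\mu^0$ and $\mu^{\calT}$. The key inequality is $F_{\calT}^{-1}(u)\ge F_0^{-1}(u)+\Delta$ for all $u$.
\begin{itemize}
\item Take any $s$ with $F_{\calT}(s)\ge u>0$.
\item If $s<\Delta$, the dominance at $t=0$ together with the convention above forces $F_{\calT}(s)\le F_0(s-\Delta)=0$. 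This is impossible, so $s\ge\Delta$.
\item Hence $F_0(s-\Delta)\ge F_{\calT}(s)\ge u$, which gives $s-\Delta\ge F_0^{-1}(u)$.
\item Taking the infimum over such $s$ yields the claim.
\end{itemize}
The equivalence of the distribution-function and test-function forms follows by a layer-cake argument. Every bounded increasing $\phi$ is, up to a constant, a mixture of indicators $\mathbf 1_{(a,\infty)}$ or $\mathbf 1_{[a,\infty)}$. For these indicators the integral inequality reduces exactly to $1-F_0(a)\le 1-F_{\calT}(a+\Delta)$, together with its left-limit version.

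\emph{Support bounds.} Let $u\downarrow0$ in the key inequality; since $F^{-1}(0^+)=\inf\supp$, this gives $\inf\supp(\mu^{\calT})\ge\inf\supp(\mu^0)+\Delta$. Let $u\uparrow1$; since $F^{-1}(1)=\sup\supp$, this gives the sup bound.

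The step we expect to be the main obstacle is the bookkeeping of boundary conventions. This covers atoms, i.e.\ right-continuity of $F$ against the left-continuity of $F^{-1}$. It also covers the shift near $t=0$ and near $t_f$: $t+\Delta$ may leave $[0,t_f]$, and we then take $F_{\calT}(t+\Delta)=1$. In particular, the dominance condition stated only on $[0,t_f]$ must already force $F_{\calT}\equiv0$ on $[0,\Delta)$. We would handle this by evaluating the condition at $t=0$ and checking carefully that it covers all $s<\Delta$.
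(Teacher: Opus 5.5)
\textbf{Verdict: genuine gap, at exactly the step you flagged.} Your route is otherwise the paper's. Necessity comes from event inclusion, and sufficiency from the quantile coupling $u\mapsto(F_0^{-1}(u),F_\calT^{-1}(u))$. You also supply what the paper only asserts: a proof of $F_\calT^{-1}(u)\ge F_0^{-1}(u)+\Delta$, the layer-cake equivalence, and the support bounds via quantile limits.

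\textbf{Where it fails.} The problem is not bookkeeping. With the right-continuous $F_0(t)=\mu^0([0,t])$ you use, the hypothesis at $t=0$ reads $\mu^\calT([0,\Delta])\le\mu^0(\{0\})$. This gives $F_\calT\equiv0$ on $[0,\Delta)$ only if $\mu^0$ has no atom at $0$. Your bullet ``$F_\calT(s)\le F_0(s-\Delta)=0$'' is really the dominance at $t=s-\Delta<0$, which is not among the hypotheses.

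\textbf{It cannot be closed as stated.} Take $\mu^0=\delta_0$ and $\mu^\calT=\delta_s$ with $0\le s<\Delta$ and $s\le t_f$. Then $F_0(t)=F_\calT(t+\Delta)=1$ for all $t\in[0,t_f]$. Under the stated convention, both sides of the test-function inequality equal $\phi(0)$. Yet the only coupling has travel time $s<\Delta$, and $\inf\supp(\mu^\calT)\ge\inf\supp(\mu^0)+\Delta$ also fails.

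The same example shows that the clamping convention $\phi(t-\Delta)=\phi(0)$ does not ``match'' $F_0=0$ on $(-\infty,0)$. Clamping compares $\mu^0$ with the law of $(t_\calT-\Delta)^+$. In your own layer-cake computation, the indicators $\mathbf 1_{(a,\infty)}$ with $a<0$ then give only $1\le 1$. The paper's proof asserts the quantile inequality without argument, so it hides the same defect.

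\textbf{How to repair it.} You must strengthen the hypothesis; after that your proof goes through essentially verbatim. Any one of the following suffices.
\begin{enumerate}
\item Require $F_0(t)\ge F_\calT(t+\Delta)$ for all $t\in\mathbb R$, with $F_0=0$ on $(-\infty,0)$. This is the genuine first-order order between $\mu^0$ and the law of $t_\calT-\Delta$, and it amounts to adding $\mu^\calT([0,\Delta))=0$. In the test-function form, evaluate $\phi(t-\Delta)$ literally instead of clamping; the indicators with $a\in(-\Delta,0)$ then encode the missing condition.
\item Assume $\mu^0(\{0\})=0$, which is natural for a departure rate with a density. Then $F_0(0)=0$, and evaluating at $t=0$ gives $F_\calT(\Delta)=0$, exactly as you intended.
\item Read $F$ as $t\mapsto\mu([0,t))$ on $[0,t_f]$. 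Taking right limits then recovers the full-line right-continuous dominance of option 1.
\end{enumerate}
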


\begin{proof}
For any feasible coupling with $t_{\calT}-t_0\ge\Delta$ almost surely, the event inclusion $\{t_{\calT}\le t+\Delta\}\subseteq\{t_0\le t\}$ holds for every $t$. Taking probabilities gives $F_{\calT}(t+\Delta)\le F_{0}(t)$ for all $t$. Assume $F_{0}(t)\ge F_{\calT}(t+\Delta)$ for all $t$. Define the left-continuous generalized inverses $F_{0}^{-1}(u)=\inf\{t\mid F_{0}(t)\ge u\}$, and $F_{\calT}^{-1}(u)=\inf\{t\mid F_{\calT}(t)\ge u\}$. Then 
$$
F_{\calT}^{-1}(u)\ \ge\ F_{0}^{-1}(u)+\Delta
$$
for all $u\in(0,1)$. Define $\pi$ as the image of the Lebesgue measure on $(0,1)$ under the map
$u\ \mapsto\ \big(F_{0}^{-1}(u),\,F_{\calT}^{-1}(u)\big)$.
By construction, the first marginal of $\pi$ has CDF $F_{0}$ and the second marginal has CDF $F_{\calT}$. The pointwise inequality above gives $t_{\calT}\ge t_0+\Delta$ for $\pi$-almost every pair, hence $\pi$ is feasible. The support bounds follow by evaluating $F_{0}(t)\ge F_{\calT}(t+\Delta)$ at $t=\inf\supp(\mu^{\calT})-\Delta$ and at $t=\sup\supp(\mu^{0})-\Delta$.
\end{proof}

The condition $F_{0}(t)\ge F_{\calT}(t+\Delta)$
is related to order-type comparisons between distributions
(see, e.g.,~\cite{hadar1969rules,shaked2007stochastic}),
but unlike in martingale optimal transport \cite{beiglbock2016problem} we do not impose any
martingale or mean-preserving constraint here. We only use it as
a simple feasibility check for the DA pair.

\subsection{Existence and uniqueness for single intermediate node}
Before analyzing uniqueness, we first establish the existence of a minimizer for Problem~\ref{prob:problem1}.

\begin{proposition}[\bf \textit{Existence of solution}]\label{prop:existence}
For sufficiently large $r$, Problem~\ref{prob:problem1} admits a minimizing solution $\pi$.
\end{proposition}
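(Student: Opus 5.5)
The plan is the direct method of the calculus of variations on the compact space $[0,t_f]^3$. The cost $c$ in \eqref{eq:cost-function-1} is nonnegative whenever $t_0<t_1<t_2$. We extend it by $+\infty$ off the open cone $\{t_0<t_1<t_2\}$ and observe that it is lower semicontinuous there: as $t_1-t_0\downarrow 0$ or $t_2-t_1\downarrow 0$ it blows up, since $w(v_0,v_1),w(v_1,v_2)>0$. The extended $c$ is therefore a lower semicontinuous function $[0,t_f]^3\to[0,+\infty]$. The admissible set consists of probability measures $\pi$ with marginals $\mu^0,\mu^\calT$ and $P_1(\pi)\le r\,dt_1$. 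This set is weakly closed, because marginal constraints are preserved under weak convergence and the inequality $\int\phi\,dP_1(\pi)\le r\int\phi\,dt_1$ for all nonnegative continuous $\phi$ passes to the limit. It is also tight, since it sits inside the probability measures on a compact set, so by Prokhorov it is weakly compact. The functional $\pi\mapsto\int c\,d\pi$ is weakly lower semicontinuous, because $c$ is a monotone limit of bounded continuous functions $c\wedge k$ (Portmanteau). A minimizing sequence thus has a convergent subsequence whose limit is admissible and attains the infimum, \emph{provided the infimum is finite}.

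The main obstacle is exhibiting one admissible $\pi$ with finite cost, and this is where ``$r$ sufficiently large'' enters. I would assume the feasibility condition of Lemma~\ref{lemma:feasible-order} with some $\Delta>0$, that is, $F_0(t)\ge F_\calT(t+\Delta)$. Under this condition the quantile coupling $u\mapsto (F_0^{-1}(u),F_\calT^{-1}(u))$ from that proof satisfies $t_2-t_0\ge\Delta$ a.s.

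To build the third coordinate, the naive choice of midpoint crossing times $t_1=(t_0+t_2)/2$ yields $c\le 2(w(v_0,v_1)+w(v_1,v_2))/\Delta<\infty$. However, the law of the midpoint could be singular and violate any density bound. I would therefore smooth it: choose $t_1$ uniformly on $[t_0+\Delta/4,\,t_2-\Delta/4]$, an interval of length at least $\Delta/2$. The $t_1$-marginal is then an average of uniform densities, each of height at most $2/\Delta$, so $\sigma^1(dt_1)\le (2/\Delta)\,dt_1$. The cost on this support is bounded by $4(w(v_0,v_1)+w(v_1,v_2))/\Delta$.

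Hence for every $r\ge 2/\Delta$ this three-marginal $\pi$ is admissible with finite cost, and the infimum is finite. I would remark that the threshold on $r$ is needed precisely to make the admissible set nonempty with finite cost; for small $r$ the set may be empty, for instance when $r\,t_f<1$.

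Finally, the weak limit must not charge the set where $c=+\infty$, but this follows automatically since its cost is bounded by the finite infimum.
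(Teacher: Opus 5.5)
Your proposal is correct and follows essentially the same route as the paper: the direct method on the compact set $[0,t_f]^3$, plus an explicit admissible plan built from the quantile coupling $u\mapsto(F_0^{-1}(u),F_{\calT}^{-1}(u))$ of Lemma~\ref{lemma:feasible-order} with a uniformly smeared crossing time $t_1$. The differences are minor: the paper draws $t_1$ uniformly from a window of length $1/r$ starting at $t_0+\varepsilon$ (requiring $\varepsilon+1/r<\Delta$) rather than from $[t_0+\Delta/4,\,t_2-\Delta/4]$, and your extension of $c$ by $+\infty$ off $\{t_0<t_1<t_2\}$, together with the finite-cost check, makes rigorous the lower-semicontinuity step that the paper only asserts.
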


\begin{proof}
The space of admissible measures $\pi$, namely
\begin{align}
\begin{aligned}
&\Pi = \bigg\{\pi \in \mathcal{M}_+\big([0, t_f]^3\big) \Big | \; \iint_{t_1, t_2} \!\!\!\!\!\! \pi = \mu^0(t_0),\ \iint_{t_0, t_1} \!\!\!\!\!\! \pi = \mu^\calT(t_2), \iint_{t_0, t_2} \!\!\!\!\!\! \pi:= \sigma^1(t_1) \leq r \, dt_1 \bigg\},  
\end{aligned}
\end{align}
is tight and closed for narrow convergence. Hence it is weakly compact. Since the cost \eqref{eq:obj1} is lower semicontinuous, the direct method gives existence of a minimizer once we know that $\Pi$ is non-empty.

We now show that $\Pi$ is non-empty when $r$ is large enough by an explicit construction.
By Lemma~\ref{lemma:feasible-order}, the minimum travel time condition is equivalent to
\begin{align}\label{eq:quantile-gap}
F_{\calT}^{-1}(u)\ge F_0^{-1}(u)+\Delta
\qquad
\text{for all }u\in(0,1).
\end{align}
Fix $\varepsilon\in(0,\Delta)$ and take sufficient large $r$ so that $\varepsilon+1/r<\Delta$.
Let $u\sim \mathrm{Unif}(0,1)$ and $s\sim \mathrm{Unif}(0,1/r)$ be independent, and define
$t_0 = F_0^{-1}(u)$,  $t_2 = F_{\calT}^{-1}(u)$, and $t_1 = t_0+\varepsilon+s$. Let $\tilde\pi$ be the law of $(t_0,t_1,t_2)$.
Then $\tilde\pi$ has $t_0$- and $t_2$-marginals $\mu^0$ and $\mu^{\calT}$. Moreover, \eqref{eq:quantile-gap} and $s\le 1/r$ imply that $t_1-t_0\ge \varepsilon$ and $t_2-t_1 \ge \Delta-\varepsilon-\frac{1}{r} > 0$, so both traveling times are positive. The condition $\varepsilon+1/r<\Delta$ means $\epsilon$ plus the maximal delay $1/r$ still fits inside the available travel-time budget $\Delta$. Here, $t_1$ is the crossing time at intermediate node. The constant $\varepsilon$ is a fixed transit delay after departure, before any mass can reach the node. The extra waiting time $s$, sampled uniformly on an interval of length $1/r$, spreads each departure cohort over a window of width $1/r$ so that the flow respects the capacity bound $r$.

Finally, conditional on $t_0$, the variable $t_1$ is uniform on an interval of length $1/r$. Hence, for any Borel set $S\subset[0,t_f]$, we have $\mathbb P(t_1\in S\mid t_0)\le r\,|S|$ since a uniform law on an interval of length $1/r$ has density $r$.
Taking expectations yields to $\sigma^1(S)=\mathbb P(t_1\in S)\le r\,|S|$, which is equivalent to $\sigma^1\le r\,dt_1$. Therefore, there always exists a $\tilde\pi$ so that $\tilde\pi\in\Pi$, and $\Pi$ is non-empty.
\end{proof}

We are now in a position to establish the uniqueness of the optimizer for Problem~\ref{prob:problem1}. For simplicity, we omit the superscript and adopt the notation $\sigma^{1} \equiv \sigma$ for the remainder of this section.

\begin{lemma}[\bf \textit{Monotonicity}]\label{lemma:monotone}
Consider the cost function
$$
c_{t_0,t_1} := \frac{w(v_0, v_1)}{t_1 - t_0},
$$
where $(t_0,t_1) \in \mathcal{M}_+\big([0, t_f]^2\big)$, and let $\mu^0$ and $\sigma$ be measures supported on $[0,t_f]$. Assume that $\sigma(t)\,dt$ is absolutely continuous with respect to the Lebesgue measure. Then, the minimizer of the Kantorovich problem
$$
\min_{\pi} \int_{t_0} \int_{t_1} c_{t_0,t_1} \,\pi(dt_0,dt_1),
$$
where $\pi$ represents a coupling of the marginals $\mu^0(dt_0)$ and $\sigma(t_1)\,dt_1$, is unique and supported on the graph of a non-increasing function $T^{\mathcal D}(t_1)$.
\end{lemma}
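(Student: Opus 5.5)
Consider the cost $c(t_0,t_1)=w/(t_1-t_0)$ with $w=w(v_0,v_1)>0$, on the region $t_1>t_0$. The plan is to verify the Monge condition of Definition~\ref{def:monge} (with the sign that forces an anti-monotone arrangement), deduce that every optimal coupling has a monotone support, and then use the absolute continuity of $\sigma$ to conclude that the support is the graph of a non-increasing map $T^{\mathcal D}$ and that the plan is unique.

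\textbf{Step 1: sign of the cross difference.} Write $c(t_0,t_1)=g(t_1-t_0)$ with $g(s)=w/s$, so that $g''(s)=2w/s^3>0$ for $s>0$. Then
\[
\partial_{t_0}\partial_{t_1}c=-g''(t_1-t_0)<0 .
\]
For $t_0\le t_0'$ and $t_1\le t_1'$, integrating the mixed derivative over the rectangle gives
\[
\Delta_{(t_0,t_0')}^{(t_1,t_1')}(c)=\int_{t_0}^{t_0'}\!\!\int_{t_1}^{t_1'}\partial_{t_0}\partial_{t_1}c\;\le 0,
\]
with strict inequality when $t_0<t_0'$ and $t_1<t_1'$. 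This is the Monge condition, i.e.\ $c$ is strictly submodular. Two caveats must be handled here. First, the rectangle has to stay inside the domain $\{t_1>t_0\}$; outside it we set $c=+\infty$, which is lower semicontinuous, and we restrict to finite-cost plans, which exist by Proposition~\ref{prop:existence}. Second, the orientation must match the claimed conclusion. The strict sign above should be rechecked directly on the four-point cross difference; if it forces co-monotone pairings, the non-increasing statement should be read in the variables actually used for $T^{\mathcal D}$. This sign and orientation check is the step I would verify first.

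\textbf{Step 2: cyclical monotonicity yields a monotone support.} Any optimal $\pi$ with finite cost is $c$-cyclically monotone on its support. This is standard Kantorovich theory: $c$ is lower semicontinuous and bounded below, and the support is compact. Take two support points $(t_0,t_1)$ and $(t_0',t_1')$ arranged in the ordering that the cross-difference inequality penalizes. Strictness of the inequality in Step 1 would then allow a swap that strictly lowers the cost, which contradicts optimality. Hence the support is a monotone set. The technical care here lies in checking that the swapped pairs remain admissible, meaning both travel times stay positive. If a swapped pair leaves the domain, its cost is $+\infty$ and the swap is not a competitor, so the argument must use a local version of the swap. This is where I expect the main obstacle to be.

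\textbf{Step 3: from a monotone support to a map, and uniqueness.} A monotone subset of $\mathbb R^2$ can contain vertical segments, i.e.\ a single $t_1$ paired with several values of $t_0$, only over countably many values of $t_1$. Since $\sigma$ is absolutely continuous, this countable set is $\sigma$-null. Therefore $\pi$ is concentrated on the graph of a monotone function $t_1\mapsto T^{\mathcal D}(t_1)$, and $\pi=(T^{\mathcal D},\mathrm{Id})_\sharp\sigma$. Such a monotone map pushing $\sigma$ to $\mu^0$ is unique, given by the quantile formula $F_0^{-1}\circ(1-F_\sigma)$ or $F_0^{-1}\circ F_\sigma$ depending on the orientation. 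Alternatively, uniqueness follows from the usual argument: if $\pi_1$ and $\pi_2$ are both optimal, then $(\pi_1+\pi_2)/2$ is optimal and hence also supported on a graph, which forces $\pi_1=\pi_2$. This is consistent with the uniqueness result \cite[Theorem 3.1.2]{rachev1998mass} cited after Definition~\ref{def:monge}.
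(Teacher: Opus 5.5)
There is a genuine gap, and it sits exactly at the orientation check you postpone in Step~1 and leave open in Step~3. That check cannot be resolved in favour of the statement. Your Step~1 computation is correct and already decides it. $\Delta^{(t_1,t_1')}_{(t_0,t_0')}(c)<0$ for $t_0<t_0'$, $t_1<t_1'$ means that pairing small with small and large with large is strictly cheaper. For example, with $w=1$ and $(t_0,t_0',t_1,t_1')=(1,2,3,4)$, the co-monotone pairing costs $\tfrac12+\tfrac12=1$, while the anti-monotone one costs $\tfrac13+1$. Hence every optimal plan is concentrated on a non-decreasing set, and $T^{\mathcal D}=F_0^{-1}\circ F_\sigma$ $\sigma$-a.e. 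Your alternative $F_0^{-1}\circ(1-F_\sigma)$ is excluded. Reading the map ``in the variables actually used'' does not help either, because the inverse of a non-decreasing map is again non-decreasing. So the lemma as written (``non-increasing'') is false in general. Take $\mu^0$ uniform on $[0,1]$ and $\sigma$ uniform on $[2,3]$: the unique optimizer is $T^{\mathcal D}(t_1)=t_1-2$, with cost $\tfrac{w}{2}$, whereas the anti-monotone map $t_1\mapsto 3-t_1$ costs $\tfrac{w}{2}\ln 3$. Once you commit to the orientation your own Step~1 dictates, your three steps prove the corrected lemma, with ``non-decreasing'' in place of ``non-increasing''.

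Your route is the same as the paper's: Monge condition, cyclical monotonicity, absolute continuity of $\sigma$ to obtain a graph, then uniqueness. The paper's version appeals to the monotone rearrangement theorem \cite[Theorem 2.18]{villani2021topics}, and it contains the same orientation error. Its displayed four-point inequality has the wrong sign; the numbers above, with the paper's $(t_0',t_0,t_1',t_1)=(1,2,3,4)$, give $1>\tfrac43$. So $c$ is quasi-antitone, i.e.\ it satisfies Definition~\ref{def:monge}, as you found, rather than quasi-monotone. The identity $\int_0^{T^{\mathcal D}(t_1)}\mu^0(dt_0)=\int_0^{t_1}\sigma(s)\,ds$ that the paper then writes defines precisely the non-decreasing map $F_0^{-1}\circ F_\sigma$, so its final inference from $T^{\mathcal D}(t_1)<t_1$ to ``non-increasing'' is a non sequitur.

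With the correct orientation, the obstacle you expect in Step~2 disappears. Suppose $(t_0,t_1)$ and $(t_0',t_1')$ carry mass with $t_0<t_0'$ and $t_1'<t_1$. Then $t_0<t_0'<t_1'<t_1$, so both swapped pairs $(t_0,t_1')$ and $(t_0',t_1)$ have positive gaps and finite cost, and no local version of the swap is needed. Two minor repairs remain. First, since the cost is $+\infty$ on the diagonal, run the argument on a $c$-cyclically monotone set of full $\pi$-measure rather than on the closed support. Second, existence of a finite-cost coupling is an implicit hypothesis of this lemma; it does not follow from Proposition~\ref{prop:existence}, which concerns Problem~\ref{prob:problem1}.
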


\begin{proof}
We first observe that for any two pairs $(t_1,t'_1)$ and $(t_0,t'_0)$ satisfying
$t_f > t_1 > t'_1 > t_0 > t'_0 > 0$,
it holds that
\begin{align*}
\frac{w(v_0,v_1)}{t_1 - t_0} + \frac{w(v_0,v_1)}{t'_1 - t'_0} > \frac{w(v_0,v_1)}{t'_1 - t_0} + \frac{w(v_0,v_1)}{t_1 - t'_0}.
\end{align*}
The ordering in this inequality characterizes $c_{t_0,t_1}$ as being quasi-monotone, in the terminology of \cite{cambanis1976inequalities}, see also \cite[Section 3.1]{rachev1998mass}.

It follows from \cite[Theorem 2.18 and Remark 2.19]{villani2021topics} that the optimal coupling $\pi$ exists and is given by the monotone rearrangement of $\mu^0$ and $\sigma$. That is, for a suitable function $T^{\mathcal D}(t_1)$, the optimal transport plan satisfies $ \int_{0}^{T^{\mathcal D}(t_1)}\mu^0(dt_0) =  \int_0^{t_1} \sigma(s)\,ds$.
Since $T^{\mathcal D}(t_1) < t_1$, it follows that $T^{\mathcal D}(t_1)$ is non-increasing  (and remains constant over time intervals corresponding to Dirac components of $\mu^0$, and also, over intervals where $\sigma$ carries no mass).
\end{proof}

In the same spirit, the cost $c_{t_1,t_2}$ with $(t_1,t_2) \in \mathcal M_+([0, t_f]^2)$ is quasi-monotone. Consequently, for a suitable function $T^{\mathcal A}(t_1)$, the optimal transport plan satisfies
$\int_0^{T^{\mathcal A}(t_1)} \mu^\calT(dt_2) =  \int_0^{t_1} \sigma(s)\,ds$
for $T^{\mathcal A}(t_1)>t_1$, so that $T^\mathcal{A}(t_1)$ is non-increasing. 

With the monotonicity of the push-forward maps $T^{\mathcal{D}}(t_1)$ and $T^{\mathcal{A}}(t_1)$ established, we now present the following uniqueness result.

\begin{theorem}[\bf \textit{Uniqueness}]\label{thm:unique1}
Under the assumptions of Problem~\ref{prob:problem1}, the minimizer is unique. Moreover, there exist functions $T^\D(t_1)$ and $T^\A(t_1)$ that are monotonically non-increasing such that $\pi = \big(T^\D, {\rm Id, T^\A}\big)_{\sharp}\sigma$,
where ${\rm Id}(t_1) = t_1$ is the identity map, and $\sigma(t_1)\,dt_1$ is an absolutely continuous measure on $[0,t_f]$ satisfying $\sigma(t_1) \leq r$.
\end{theorem}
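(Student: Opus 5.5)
We plan to reduce Theorem~\ref{thm:unique1} to two bi-marginal problems glued along the middle marginal $\sigma$. Let $\pi^\star$ be any minimizer, which exists by Proposition~\ref{prop:existence}. Let $\sigma^\star$ be its $t_1$-marginal. Since $\sigma^\star \le r\,dt_1$, it is absolutely continuous with density bounded by $r$. The cost \eqref{eq:cost-function-1} splits as $c = c_{t_0,t_1} + c_{t_1,t_2}$. So the objective equals $\langle c_{t_0,t_1}, P_{01}\pi^\star\rangle + \langle c_{t_1,t_2}, P_{12}\pi^\star\rangle$.

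For fixed $\sigma^\star$, the two-dimensional projections $P_{01}\pi^\star$ and $P_{12}\pi^\star$ must be optimal for the bi-marginal problems with marginals $(\mu^0,\sigma^\star)$ and $(\sigma^\star,\mu^\calT)$. Otherwise we could replace one of them by a cheaper plan and glue the result to the other along $\sigma^\star$. Gluing is possible because both share the common marginal $\sigma^\star$, and disintegration gives a feasible three-marginal plan with strictly smaller cost. By Lemma~\ref{lemma:monotone} and the analogous statement for $c_{t_1,t_2}$, each projection is unique and supported on the graph of a monotone map, $T^\D(t_1)$ and $T^\A(t_1)$ respectively. Because both projections are graphs over $t_1$, the glued plan is forced: $\pi^\star = (T^\D,\mathrm{Id},T^\A)_\sharp\sigma^\star$. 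Equivalently, the cost satisfies the generalized Monge condition of Definition~\ref{def:genmonge} pairwise, and the $(t_0,t_2)$ pair imposes no cross term.

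It remains to show that $\sigma^\star$ itself is unique. This is the main obstacle, since the constraint $\sigma\le r$ makes $\sigma$ a free variable. Our first approach is convexity. The value function $V(\sigma) := W_{c_{01}}(\mu^0,\sigma) + W_{c_{12}}(\sigma,\mu^\calT)$ is convex in $\sigma$, because it is an infimum of a linear functional over a jointly convex constraint set. The feasible set $\{\sigma\le r\,dt\}$ is convex. So if $\pi_1,\pi_2$ are both minimizers, $\tfrac12(\pi_1+\pi_2)$ is also a minimizer. By the previous paragraph applied to this midpoint, its two projections must be supported on graphs over $t_1$. On the set where $\sigma_1$ and $\sigma_2$ overlap, the mixture has this graph property only if $T^\D_1=T^\D_2$ and $T^\A_1=T^\A_2$ there.

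To conclude $\sigma_1=\sigma_2$, we would pass to quantiles. The monotone maps are determined by $F_0^{-1}$, $F_\calT^{-1}$ and $\Sigma_k(t)=\int_0^t\sigma_k$, through the relations stated after Lemma~\ref{lemma:monotone}. Writing the cost as a functional of the quantile function $\Sigma^{-1}(u)$, namely
\[
\int_0^1 \Big[\tfrac{w(v_0,v_1)}{\Sigma^{-1}(u)-F_0^{-1}(u)}+\tfrac{w(v_1,v_2)}{F_\calT^{-1}(u)-\Sigma^{-1}(u)}\Big]du,
\]
the integrand is strictly convex in $\Sigma^{-1}(u)$ on the admissible interval, since $x\mapsto 1/x$ is strictly convex for $x>0$. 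The constraint $\sigma\le r$ becomes a convex constraint: the slope of $\Sigma^{-1}$ is at least $1/r$. Strict convexity then gives a unique optimal quantile function, hence a unique $\sigma^\star$ and a unique $\pi^\star$. The delicate step is justifying this quantile rewriting, namely that the optimal plan couples all three marginals comonotonically through a common $u$. We expect this to follow from the monotone structure established above.
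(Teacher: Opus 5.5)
Your argument is correct. Its first half matches the paper's proof, made slightly more explicit: you split the cost, glue along $\sigma^\star$, apply Lemma~\ref{lemma:monotone} to both projections, and observe that two graphs over $t_1$ force $\pi^\star=(T^\D,\mathrm{Id},T^\A)_\sharp\sigma^\star$.

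You take a different route in proving that $\sigma^\star$ is unique. The paper takes the linear midpoint $\tfrac12(\pi^a+\pi^b)$ and argues that its $(t_0,t_1)$-projection can be graph-supported only if $T^{\D,a}=T^{\D,b}$, and likewise $T^{\A,a}=T^{\A,b}$. It stops there. As you noticed, this gives agreement of the maps only where the supports overlap, not $\sigma^a=\sigma^b$. It is also vacuous when the boundary marginals have atoms: for $\mu^0=\delta_0$, $\mu^\calT=\delta_{t_f}$ the maps are constant for every $\sigma$. Your quantile reformulation replaces this with strict convexity. The capacity constraint is preserved under averaging of quantile functions, and the cost is strictly convex along that interpolation because $x\mapsto 1/x$ is. So the argument works regardless of atoms, and your linear-midpoint paragraph becomes unnecessary.

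The step you flag as delicate is already supplied by your second paragraph, once you apply it to every admissible $\sigma$ and not only to optimal ones. The cheapest plan with $t_1$-marginal $\sigma$ is $(F_0^{-1}\circ\Sigma,\mathrm{Id},F_\calT^{-1}\circ\Sigma)_\sharp\sigma$. Since $\sigma$ is atomless, $\Sigma(t_1)$ is uniform and $\Sigma^{-1}(\Sigma(t_1))=t_1$ $\sigma$-a.s., so this plan is the law of $(F_0^{-1}(U),\Sigma^{-1}(U),F_\calT^{-1}(U))$ with $U$ uniform. The problem is therefore exactly the minimization of your functional over left-continuous nondecreasing $G\colon(0,1)\to[0,t_f]$ with $G(u')-G(u)\ge (u'-u)/r$. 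This set is convex and in bijection with the admissible $\sigma$, since the inverse of such a $G$ is $r$-Lipschitz. By left-continuity, distinct such $G$ differ on a set of positive measure. The minimum is finite (e.g.\ by the construction in Proposition~\ref{prop:existence}), so strict convexity yields uniqueness.

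One caveat: your relations make $T^\D$ and $T^\A$ non-decreasing. That is the correct direction, because the cost is a convex function of $t_1-t_0$ and hence submodular. It also agrees with the paper's own defining relation after Lemma~\ref{lemma:monotone}. You are therefore proving the statement with ``non-increasing'' read as non-decreasing, and should say so explicitly.
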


\begin{proof}
Let $\pi$ be a minimizer as established in Proposition~\ref{prop:existence}, and define $\pi_{t_0,t_1} := \int_{t_2} \pi$, $\pi_{t_1,t_2} := \int_{t_0} \pi$, and $\sigma := \iint_{t_0,t_2} \pi$. Since the objective function satisfies
\begin{align*}
\iiint_{t_0,t_1,t_2} \!\!\!\!\!\!\!\! c \, \pi
= \iint_{t_0,t_1} \!\!\!\!\! c_{t_0,t_1} \pi_{t_0,t_1} +
\iint_{t_1,t_2} \!\!\!\!\!\!\! c_{t_1,t_2} \pi_{t_1,t_2},
\end{align*}
it follows that both $\pi_{t_0,t_1}$ and $\pi_{t_1,t_2}$ must be minimizers of their respective problems. Otherwise, there would exist alternative couplings $\hat\pi_{t_0,t_1}$ and $\hat\pi_{t_1,t_2}$ achieving strictly lower costs
\begin{align*}
\iint_{t_0,t_1} c_{t_0,t_1} \hat\pi_{t_0,t_1} < \iint_{t_0,t_1} c_{t_0,t_1} \pi_{t_0,t_1}, \ \iint_{t_1,t_2} c_{t_1,t_2} \hat\pi_{t_1,t_2} < \iint_{t_1,t_2} c_{t_1,t_2} \pi_{t_1,t_2}.
\end{align*}
Since both couplings share the same marginal on the $t_1$-axis, we have
\begin{align*}
\int_{t_0} \hat\pi_{t_0,t_1}(dt_0, dt_1) = \int_{t_2} \hat\pi_{t_1,t_2}(dt_1, dt_2) = \sigma(t_1)\,dt_1.
\end{align*}
By the gluing lemma \cite[page 11]{villani2009optimal}, there exists a coupling $\hat\pi$ on $\mathcal{M}_+([0, t_f]^3)$ that preserves these marginals while achieving a strictly lower cost, contradicting optimality. Thus, both $\pi_{t_0,t_1}$ and $\pi_{t_1,t_2}$ must be optimal.

Next, we show that the relaxed marginal $\sigma$ is unique, which establishes the desired result via Lemma~\ref{lemma:monotone}. Suppose there exist two distinct minimizers, $\pi^a$ and $\pi^b$. Each induces a density $\sigma^a(t_1)$ and $\sigma^b(t_1)$, respectively, on the $t_1$-axis, along with corresponding transport maps $(T^{\D,a}, T^{\A,a})$ and $(T^{\D,b}, T^{\A,b})$. Since both $\sigma^a(t_1)$ and $\sigma^b(t_1)$ satisfy the constraint $\sigma(t_1) \leq r$, their convex combination $\bar \sigma = \frac{1}{2} \sigma^a + \frac{1}{2} \sigma^b$
also remains feasible. Similarly, the convex combination
$\bar \pi = \frac{1}{2} \pi^a + \frac{1}{2} \pi^b$
defines an optimal coupling. However, the bi-marginal coupling
\begin{align*}
\bar \pi_{xt} = (T^{\D,a}, {\rm Id})_{\sharp}\tfrac{1}{2} \sigma^a +
(T^{\D,b}, {\rm Id})_{\sharp}\tfrac{1}{2} \sigma^b
\end{align*}
fails to be supported on the graph of a function, which implies that there exists a \emph{cheaper} coupling with strictly lower cost, obtained by monotone rearrangement of $\bar \pi_{xt}$. Since $\bar \pi_{xt}$ satisfies the marginal constraints, meaning
$T^{\D,a}_{\sharp} \sigma^a + T^{\D,b}_{\sharp} \sigma^b = \mu^0$, the only way for $\bar \pi_{xt}$ to be supported on the graph of a function is if $T^\D$ is unique, i.e., $T^{\D} = T^{\D,a} = T^{\D,b}$.

A similar argument applies to $\bar \pi_{yt}$, ensuring that $T^\A$ is also unique. If this were not the case, a more efficient coupling could be constructed through monotone rearrangement, contradicting optimality, and thus completes our proof.
\end{proof}

\subsection{Generation to multi-node line graph}

We are now in a position to extend Problem~\ref{prob:problem1} to a line graph with $n$ intermediate vertices. Consider a single path $\mathbf p$ defined as
$
\mathbf p = \{v_0, v_1, \dots, v_{\calT-1}, v_\calT\},
$
and adopt the convention $v_\calT := v_{n+1}$, the corresponding cost function is defined as
\begin{align}\label{eq:cost}
c^{(\mathbf{p})}(t_0, t_1, \dots, t_{\calT-1}, t_\calT) := \sum_{\ell=1}^{\calT}
\frac{w(v_{\ell-1}, v_\ell)}{t^\ell - t^{\ell-1}},
\end{align}
with $w(v_{\ell-1}, v_\ell)$ representing the weight associated with the edge connecting $v_{\ell-1}$ and $v_\ell$.

\begin{customproblem}{$\mathbf 1^\prime$}[\bf \textit{Independent DA on line graph}]\label{prob:prob1-prime}
Given probability distributions $\mu^0(t_0)$ and $\mu^\calT(t_\calT)$, along with suitable flow-rate bounds $r_{1},r_2,\dots,r_{\calT-1} > 0$, determine a probability measure $\pi(t_0, \dots, t_\calT)$ on $\mathcal{M}_+\big([0, t_f]^{n+2}\big)$ that minimizes
\begin{subequations}
\begin{align}
\int_{t_0,t_1,\dots,t_{\calT-1},t_\calT}\!\!\!\!\!\!\!\!\!\!\!\!\!\!\!\!\!\!\!\!\!\!\!\!\!\!\!\!\! c^{(\mathbf{p})}(t_0,t_1,\dots,t_{\calT-1},t_\calT) \,\pi(dt_0, dt_1, \dots, dt_{\calT-1}, dt_\calT),
\end{align}
subject to the departure and arrival constraint:
\begin{align}
&\int_{t_1,\dots,t_{\calT-1},t_\calT}\!\!\!\!\!\!\!\!\!\!\! \pi(dt_0,dt_1, \dots, dt_{\calT-1}, dt_\calT) = \mu^0(t_0), \label{eq:ind-arrival}\\
&\int_{t_0,t_1,\dots,t_{\calT-1}}\!\!\!\!\!\!\!\!\!\!\! \pi(dt_0,dt_1, \dots, dt_{\calT-1}, dt_\calT) = \mu^\calT(t_\calT),\label{eq:ind-depart}
\end{align}
and the flow-rate constraint:
\begin{align}\label{eq:ind-bound}
\int_{t_0,\dots,t_\calT/\{t_\ell\}}\!\!\!\!\!\!\!\!\!\!\!\!\!\!\!\!\!\!\!\!\!\!\!\!\!\! \pi(dt_0,dt_1, \dots, dt_{\calT-1}, dt_\calT) \leq r_\ell\,dt_\ell,\;\ell=1,\dots,\calT-1.
\end{align}
\end{subequations}
\end{customproblem}
\setcounter{problem}{1}

The set of admissible measures for Problem~\ref{prob:prob1-prime} reads
$\Pi:= \big\{ \pi \;\big|\; \pi \mbox{ satisfies } \eqref{eq:ind-arrival}, \eqref{eq:ind-depart}, \mbox{ and } \eqref{eq:ind-bound} \big\}$.
To establish the non-emptiness of $\Pi$ and therefore existence of a feasible $\pi\in\Pi$, consider the ordered sequence of crossing distributions $\sigma^\ell(dt)$ at each intermediate node $v_\ell$. These distributions must satisfy the feasibility condition in Lemma \ref{lemma:feasible-order}. Moreover, each crossing distribution is normalized such that $\int_0^{t_f} \sigma^\ell(dt) = 1$ for all $\ell = 1, \dots, \calT-1$.

To establish the uniqueness of the optimizer $\pi$, we first recall a multi-marginal generalization of the Monge problem (Definition~\ref{def:monge}) in the one-dimensional setting. Specifically, we introduce the generalized Monge condition \cite[Page 24]{rachev1998mass}, which provides a structural criterion for the existence of an optimal transport plan in the form of a Monge map. This condition applies to the MMOT problems where the cost function $c(x_1, x_2, \dots, x_n)$ with $x_i \in \mathbb{R}$ for all $i=1,2,\dots,n$.

\begin{theorem}[\bf \textit{Uniqueness}]\label{thm:unique1-prime}
Problem~\ref{prob:prob1-prime} admits unique minimizer when feasible.
\end{theorem}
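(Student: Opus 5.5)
The plan is to reduce Problem~\ref{prob:prob1-prime} to a chain of bi-marginal problems, as in the proof of Theorem~\ref{thm:unique1}, and then use strict convexity to pin down the crossing profiles. Existence of a minimizer when $\Pi\neq\emptyset$ follows exactly as in Proposition~\ref{prop:existence}: $\Pi$ is tight and narrowly closed, and the cost is lower semicontinuous (taking value $+\infty$ when $t_\ell\le t_{\ell-1}$).

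\textbf{Step 1: decomposition along the chain.} Let $\pi$ be a minimizer. Set $\sigma^0:=\mu^0$ and $\sigma^\calT:=\mu^\calT$, write $\sigma^\ell$ for its $t_\ell$-marginal, and write $\pi_{\ell-1,\ell}$ for its $(t_{\ell-1},t_\ell)$-marginal. Because the cost \eqref{eq:cost} is a sum of pairwise terms,
\[
\langle c^{(\mathbf p)},\pi\rangle=\sum_{\ell=1}^{\calT}\langle c_{\ell-1,\ell},\pi_{\ell-1,\ell}\rangle .
\]
Each $\pi_{\ell-1,\ell}$ must therefore be optimal for the bi-marginal problem between $\sigma^{\ell-1}$ and $\sigma^\ell$. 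Otherwise, replace one of them by a strictly cheaper coupling and glue the chain $\pi_{0,1},\dots,\pi_{\calT-1,\calT}$ using the gluing lemma iterated along the path. The glued plan keeps all marginals, hence respects \eqref{eq:ind-arrival}--\eqref{eq:ind-bound}, and has strictly lower cost, which is a contradiction.

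\textbf{Step 2: monotone structure.} Each pairwise cost $w(v_{\ell-1},v_\ell)/(t_\ell-t_{\ell-1})$ is quasi-monotone by the inequality in Lemma~\ref{lemma:monotone}. Consequently $c^{(\mathbf p)}$ satisfies the generalized Monge condition of Definition~\ref{def:genmonge}: pairs $(i,j)$ with $|i-j|>1$ do not interact, and adjacent pairs inherit the two-point inequality. The intermediate marginals $\sigma^\ell\le r_\ell\,dt$ are absolutely continuous. Hence, by Lemma~\ref{lemma:monotone} applied edge by edge, each optimal $\pi_{\ell-1,\ell}$ is the unique comonotone (quantile) coupling of $\sigma^{\ell-1}$ and $\sigma^\ell$. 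Composing these couplings shows that $\pi$ is the quantile coupling $u\mapsto(F_{\sigma^0}^{-1}(u),\dots,F_{\sigma^\calT}^{-1}(u))_\sharp\mathrm{Leb}$, pushed forward from any single intermediate $\sigma^\ell$. It follows that $\pi$ is determined by the tuple $(\sigma^1,\dots,\sigma^{\calT-1})$.

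\textbf{Step 3: uniqueness of the crossing profiles (main obstacle).} Suppose $\pi^a$ and $\pi^b$ are two minimizers with profiles $(\sigma^{\ell,a})$ and $(\sigma^{\ell,b})$. The average $\bar\pi=\tfrac12(\pi^a+\pi^b)$ is feasible, since the constraints are linear and the bounds are convex, and it is optimal. By Step 2 applied to $\bar\pi$, each pair marginal $\bar\pi_{\ell-1,\ell}$ must be supported on a monotone graph. As in Theorem~\ref{thm:unique1}, a mixture of two distinct monotone graphs is generally not a graph, which would contradict Step 2, so the maps must coincide.

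To make this rigorous I would pass to quantile coordinates. Write $q_\ell=F_{\sigma^\ell}^{-1}$. The cost becomes
\[
\int_0^1\sum_\ell \frac{w_\ell}{q_\ell(u)-q_{\ell-1}(u)}\,du,
\]
which is strictly convex in the increments $q_\ell-q_{\ell-1}$ on the region where they are positive. The delicate point is that the rate constraint $\sigma^\ell\le r_\ell$ reads $q_\ell'(u)\ge 1/r_\ell$ in these coordinates. This is also a convex constraint, so the feasible set of quantile tuples, with $q_0$ and $q_\calT$ fixed, is convex.

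Strict convexity of $x\mapsto w/x$ then forces $q_\ell^a=q_\ell^b$ for a.e.\ $u$, inductively in $\ell$ starting from the fixed $q_0$. Hence $\sigma^{\ell,a}=\sigma^{\ell,b}$ for all $\ell$, and by Step 2 we get $\pi^a=\pi^b$.

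What needs care is justifying that optimal plans are exactly the quantile representations, so that convex combination in $q$-space rather than in measure space is legitimate. This is where I expect the main technical work to lie, particularly when $\mu^0$ or $\mu^\calT$ has atoms. In that case $q_0$ or $q_\calT$ is only non-decreasing, and I would handle it by working with left-continuous generalized inverses as in Lemma~\ref{lemma:feasible-order}.
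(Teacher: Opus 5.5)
Your Steps 1--2 follow the paper's structure: the chain decomposition and gluing from the proof of Theorem~\ref{thm:unique1}, then the generalized Monge/comonotone structure. Step~3 takes a genuinely different route.

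The paper stays in measure space. The linear mixture $\tfrac12(\pi^a+\pi^b)$ is optimal, and the paper argues that its pair marginals, being mixtures of two distinct monotone graphs, cannot lie on a monotone graph, so a monotone rearrangement would be strictly cheaper. You instead interpolate quantile functions, $\bar q=\tfrac12(q^a+q^b)$. This is a displacement interpolation rather than a linear one. Along it the cost $J(q)=\int_0^1\sum_\ell w_\ell/(q_\ell-q_{\ell-1})\,du$ is strictly convex, and the capacity bound becomes the convex condition that $u\mapsto q_\ell(u)-u/r_\ell$ is non-decreasing. That is the right reading of $q_\ell'\ge 1/r_\ell$, since $q_\ell$ may jump across gaps in $\supp\sigma^\ell$.

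The paper's route is shorter, but its key claim needs an atomless fixed boundary marginal. Take $\mu^0=\delta_0$, $\mu^\calT=\delta_{t_f}$ and one intermediate node. Every feasible plan is then $\delta_0\otimes\sigma\otimes\delta_{t_f}$, mixtures of such plans remain comonotone (their pair marginals are graphs of constant maps), and no contradiction arises. Your argument still gives uniqueness there: it reduces to strict convexity of $q_1\mapsto\int_0^1\big(w_1/q_1+w_2/(t_f-q_1)\big)\,du$. So your approach covers atomic departure/arrival laws and yields $\sigma^{\ell,a}=\sigma^{\ell,b}$ by induction from the fixed $q_0$.

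The justification you flag as the main technical work is already supplied by Step~2. Each minimizer is the quantile coupling of its own marginals, so $J(q^a)=J(q^b)$ equals the optimal value. The quantile coupling of $\bar q$ is feasible, so $J(\bar q)$ cannot be smaller. Atoms of $\mu^0,\mu^\calT$ cause no trouble because $q_0$ and $q_\calT$ are fixed.

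Two small points:
\begin{itemize}
\item Derive the submodularity of each edge cost directly, e.g.\ from the mixed partial $-2w_\ell(t_\ell-t_{\ell-1})^{-3}<0$ or from strict convexity of $x\mapsto w_\ell/x$. Do not cite the displayed inequality in Lemma~\ref{lemma:monotone}: as written it points the wrong way (it asserts the comonotone pairing is costlier), even though the comonotone conclusion you draw is correct.
\item State that the optimal value is finite, since otherwise every feasible plan is a minimizer.
\end{itemize}
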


\begin{proof}
The uniqueness of the optimizer follows if the cost function $c(t_0,t_1,\dots,t_\calT)$ satisfies the generalized Monge condition in Definition~\ref{def:genmonge}. For any pair $i < j$, the cost function satisfies the Monge condition, ensuring quasi-monotonicity when all other variables are fixed. By Lemma~\ref{lemma:monotone}, the generalized Monge condition follows, resulting in uniqueness.

We next show that the relaxed marginal $\sigma_i$ is unique. Suppose there exist two distinct minimizers, $\pi^a$ and $\pi^b$, inducing different densities $\sigma^a(t_i)$ and $\sigma^b(t_i)$, along with corresponding transport maps $T^{j,a}$ and $T^{j,b}$. The convex combinations
$\bar \sigma = \frac{1}{2} \sigma^a + \frac{1}{2} \sigma^b$ and $\bar \pi = \frac{1}{2} \pi^a + \frac{1}{2} \pi^b
$, then remain feasible. However, since an optimal coupling must be supported on the graph of a function, the convex combination
$$
\bar \pi_{t_i, t_j} = ({\rm Id},  T^{j,a})_{\sharp}\tfrac{1}{2} \sigma^a +
({\rm Id},  T^{j,b})_{\sharp}\tfrac{1}{2} \sigma^b
$$
fails to satisfy this requirement. This indicates the existence of a strictly lower-cost coupling through monotone rearrangement, which contradicts optimality. Consequently, the marginal $\sigma$ is unique for all $i$.
\end{proof}

\section{Coupled DA constraints: formulation and uniqueness}\label{sec:couple}

In this section, we turn to the second regime, where departures and arrivals are no longer specified independently but are tied together through a joint departure--arrival law. On the same line graph as in Section~\ref{sec:ind}, each particle now carries a prescribed pair $(t_0,t_{\calT})$ describing its release and delivery times, so that the DA constraint fixes which departures must match which arrivals. The scheduling problem is to insert crossing times at the intermediate nodes that respect these joint DA specifications while satisfying nodal capacity limits. This leads naturally to an unequal-dimensional OT formulation, in which the two-dimensional DA distribution is mapped to one-dimensional crossing-time profiles along the path. We now formally define the coupled DA constraint as follows.

\begin{definition}[\bf \textit{Coupled DA constraint}]
A coupled DA constraint is a probability measure $\mu^{0,\calT}$ defined on product space $\mathcal{M}_+([0,t_f]^2)$. The measure $\mu^{0,\calT}(dt_0,dt_\calT)$ specifies the joint probability that mass departing at time $t_0$ arrives at time $t_\calT$.
\end{definition}
The coupled DA constraint captures the dependency between departure and arrival times, establishing a structured relationship that governs the temporal dynamics of the transport process, see Fig. \ref{fig:DA}.

\begin{figure}[htb!]
\centering
\includegraphics[width=.7\linewidth]{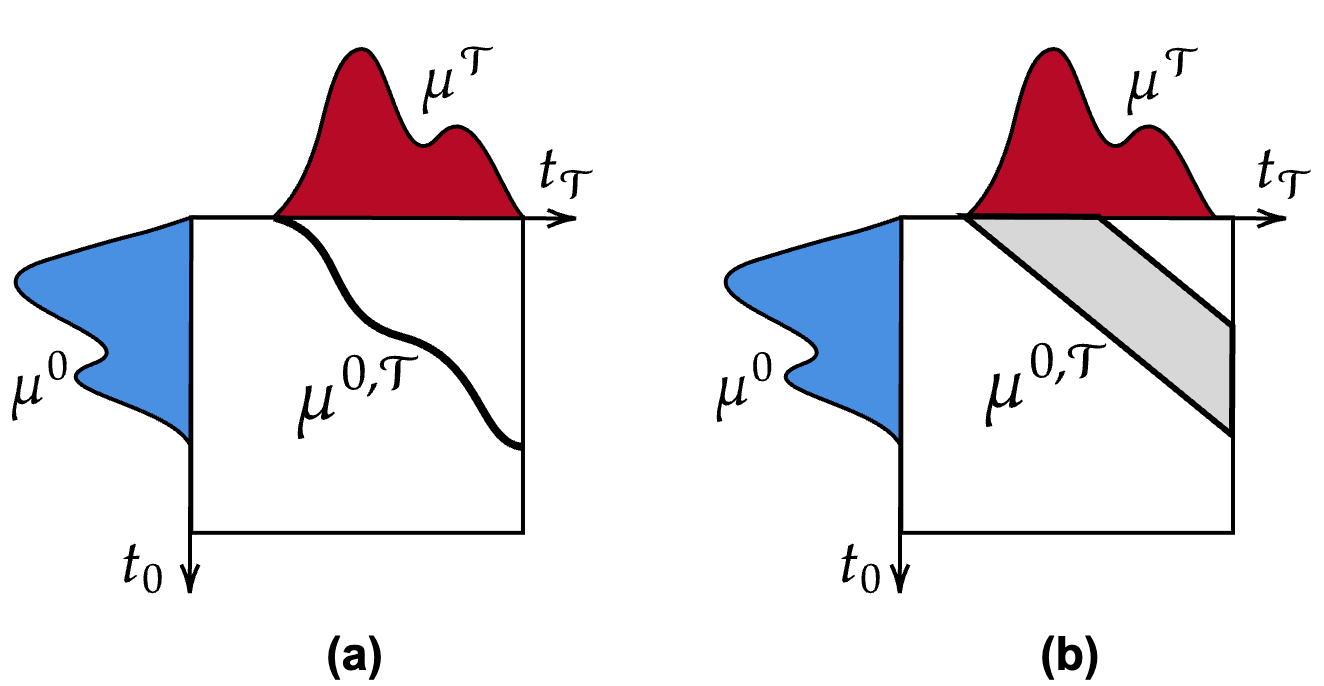}
\caption{Illustration of the coupled DA constraint.
(a) When the joint probability measure $\mu^{0,\calT}$ follows a Monge map, it is supported on the graph of a function, ensuring that each particle is assigned a unique DA time pair. This enforces the assumption that particles departing earlier also arrive earlier.
(b) When $\mu^{0,\calT}$ is not constrained to a function’s graph, the DA relationship is relaxed. Instead of a fixed pair, the arrival time can vary within an interval, introducing flexibility in transport dynamics.}
\label{fig:DA}
\end{figure}

\subsection{Single intermediate node with coupled DA}
Adopting the same assumptions as in Problem~\ref{prob:problem1}, we consider the case of a single toll. Under this setting, the problem can be formulated as follows.
\begin{problem}[\bf \textit{Coupled DA at single node}]\label{prob:prob2}
Given a coupled DA constraint represented by the probability measure $\mu^{0,2}$ and a flow-rate bound $r > 0$, determine a probability measure $\pi(t_0,t_1,t_2)$ on $\mathcal M_+([0, t_f]^3)$ that minimizes
\begin{subequations}
\begin{align}\label{eq:cost-function-3}
\int_{t_0,t_1,t_2} \Bigg( \frac{w(v_0,v_1)}{t_1-t_0} + \frac{w(v_1,v_2)}{t_2-t_1} \Bigg) \pi(dt_0,dt_1,dt_2)
\end{align}
subject to the coupled DA distribution
\begin{align}
\int_{t_1} \pi(dt_0,dt_1,dt_2) = \mu^{0,2}(dt_0,dt_2),
\end{align}
and flow-rate constraint $\int_{t_0,t_\calT} \pi(dt_0,dt_1,dt_2) \leq r dt_1$.
\end{subequations}
\end{problem}

The generalized form of Problem~\ref{prob:prob2} is illustrated in Fig.~\ref{fig:single-path-DA}. Given the joint probability measure governing departure and arrival times, the objective is to determine the optimal crossing times for all particles at each intermediate node. This formulation aligns with the framework of unequal-dimensional OT \cite{chiappori2016multidimensional,chiappori2017multi,chiappori2020multidimensional}, where the two-dimensional coupled DA constraint is mapped to a one-dimensional probability measure characterizing the crossing times at intermediate vertices.
\begin{figure}[htb!]
    \centering
    \includegraphics[width=.7\linewidth]{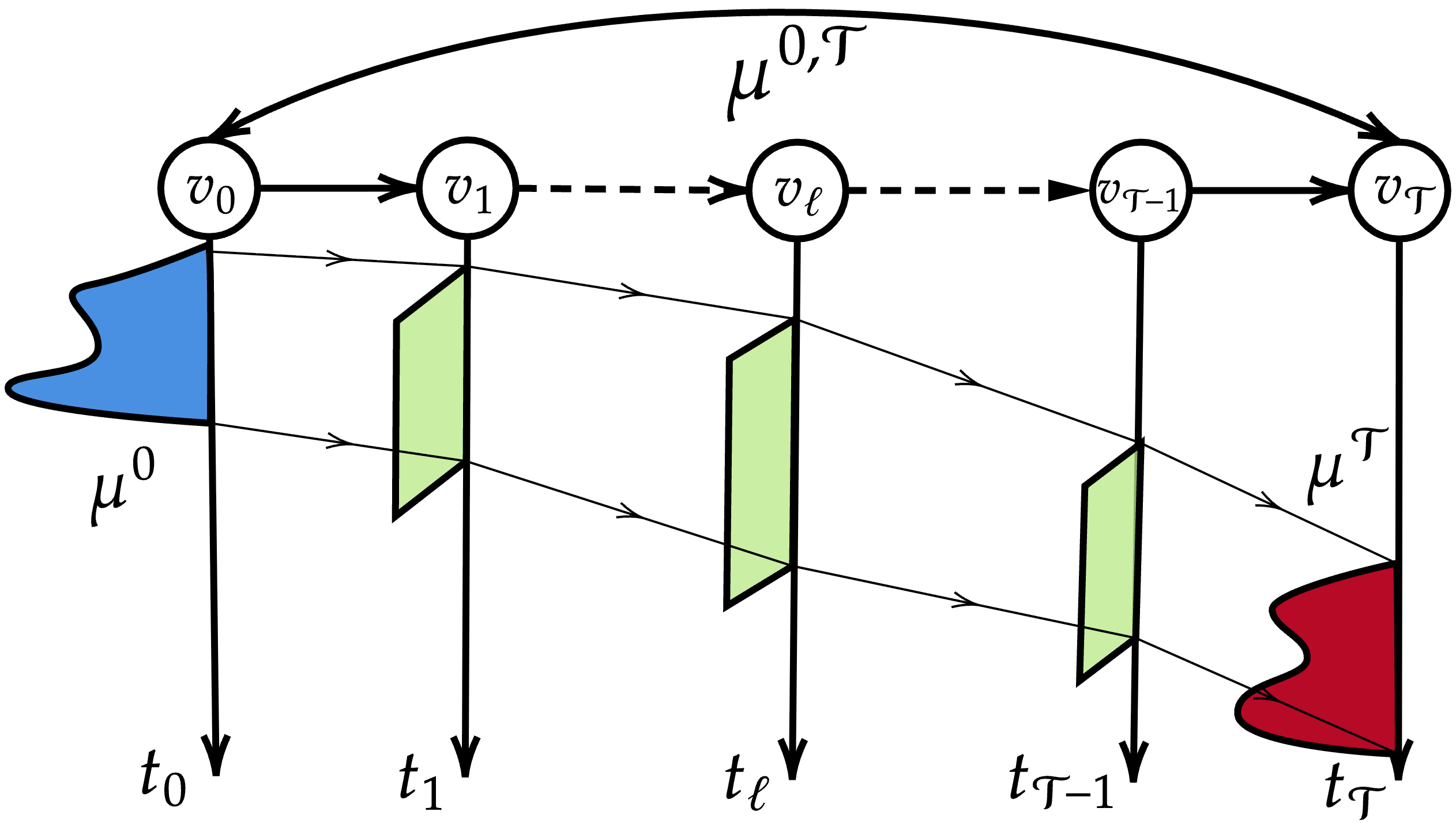}
    \caption{Diagram of the coupled DA constraint over a line graph with source node $v_0$ and sink node $v_\calT$, and connected through intermediate nodes $v_1, \dots, v_{\calT-1}$. The coupled DA $\pi(\mu^0, \mu^\calT)$ encodes the joint probability of departure time $t_0$ and arrival time $t_\calT$. At each intermediate node $v_\ell$, the transport process is characterized by a crossing-time distribution, determining when mass moves from one node to the next.}
    \label{fig:single-path-DA}
\end{figure}
The existence of the problem thus follows directly.
\begin{theorem}[\bf \textit{Existence}]
For sufficiently large flow-rate bound $r$, Problem~\ref{prob:prob2} admits a minimizing solution $\pi$.
\end{theorem}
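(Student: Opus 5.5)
The argument follows the proof of Proposition~\ref{prop:existence}: a compactness step plus an explicit feasible plan. We need a minimum-travel-time assumption on the coupled DA law, namely that $t_2-t_0\ge\Delta>0$ for $\mu^{0,2}$-almost every pair. This is the coupled analogue of the hypothesis in Lemma~\ref{lemma:feasible-order}, and it is implicit in the phrase ``sufficiently large $r$''.

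\emph{Compactness and lower semicontinuity.} Let
\[
\Pi=\Big\{\pi\in\mathcal M_+([0,t_f]^3)\;\Big|\;\textstyle\int_{t_1}\pi=\mu^{0,2},\ \iint_{t_0,t_2}\pi\le r\,dt_1\Big\}.
\]
Since $[0,t_f]^3$ is compact, $\Pi$ is tight. Both constraints pass to the limit under narrow convergence. The marginal constraint does so by continuity of the projection. The capacity constraint does so because $\sigma_k(\varphi)\le r\int\varphi\,dt$ for every nonnegative continuous $\varphi$ is preserved in the limit. Hence $\Pi$ is weakly compact. The integrand $w(v_0,v_1)/(t_1-t_0)+w(v_1,v_2)/(t_2-t_1)$ should be read as $+\infty$ whenever a time increment is nonpositive. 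With that convention it is a lower semicontinuous map into $[0,+\infty]$ on the region where it is finite, so $\pi\mapsto\int c\,d\pi$ is narrowly lower semicontinuous. The direct method then gives a minimizer once $\Pi$ contains some $\pi$ of finite cost.

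\emph{Explicit feasible plan.} Fix $\varepsilon\in(0,\Delta)$ and choose $r$ large enough that $\varepsilon+1/r<\Delta$. Draw $(t_0,t_2)\sim\mu^{0,2}$ and, independently, $s\sim\mathrm{Unif}(0,1/r)$. Set $t_1=t_0+\varepsilon+s$ and let $\tilde\pi$ be the law of $(t_0,t_1,t_2)$.
\begin{itemize}
\item By construction, the $(t_0,t_2)$-marginal of $\tilde\pi$ is exactly $\mu^{0,2}$. This is easier than the independent case, because no quantile coupling is needed.
\item The increments satisfy $t_1-t_0\ge\varepsilon$ and $t_2-t_1\ge\Delta-\varepsilon-1/r>0$. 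The integrand is therefore bounded on the support of $\tilde\pi$, and the cost is finite.
\item Conditionally on $(t_0,t_2)$, the crossing time $t_1$ is uniform on an interval of length $1/r$, so its density is at most $r$. Averaging over $(t_0,t_2)$ gives $\sigma^1(S)\le r|S|$ for every Borel set $S$, i.e.\ $\sigma^1\le r\,dt_1$.
\end{itemize}
Hence $\tilde\pi\in\Pi$. The direct method gives the minimizer, and its cost is at most that of $\tilde\pi$, hence finite.

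\emph{Main obstacle.} Two points need care.
\begin{enumerate}
\item The cost is singular where $t_1=t_0$ or $t_2=t_1$. It must be extended by $+\infty$ there and treated as a lower semicontinuous function with values in $[0,\infty]$. The extension is indeed lower semicontinuous, since $w/(t_1-t_0)\to+\infty$ as $t_1-t_0\downarrow0^+$.
\item The times must stay in $[0,t_f]$. This requires $t_0+\varepsilon+1/r\le t_f$, which follows from $t_1<t_2\le t_f$. The bound $t_2\le t_f$ holds because $t_2\le t_f$ on $\supp\mu^{0,2}$.
\end{enumerate}
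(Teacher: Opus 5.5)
Your proposal is correct and follows the same route as the paper: tightness and narrow closedness of $\Pi$, lower semicontinuity of the cost, then the direct method. The paper merely asserts that $\Pi$ is non-empty, so your additions fill in what it leaves implicit: the explicit plan $t_1=t_0+\varepsilon+s$ adapted from Proposition~\ref{prop:existence}, the $+\infty$ extension of the singular cost, and the explicit gap hypothesis $t_2-t_0\ge\Delta$.
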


\begin{proof}
The space of admissible measures $\pi$ in Problem  \ref{prob:prob2} reads
\begin{align*}
\Pi = \Big\{&\pi \in \mathcal{M}_+\big([0, t_f]^3\big) \Big | \int_{t_1} \pi = \mu^{0,2}(dt_0,dt_2),\ \iint_{t_0, t_2} \pi:= \sigma^1(t_1) \leq r , dt_1 \Big\}
\end{align*}
is non-empty. It is compact in the weak topology as it is tight and closed for the narrow convergence. As its cost function is lower semi-continuous, we have the existence of a minimizer. 
\end{proof}

\subsection{Unequal-dimensional OT for uniqueness}
Before further analyzing the properties of Problem~\ref{prob:prob2}, we formally introduce unequal-dimensional OT, where the optimal transportation plan extends to probability measures with different dimensional supports, i.e., $T: \mathbb{R}^n \to \mathbb{R}^m,$ with $m \neq n.$ Previous works ~\cite{chiappori2016multidimensional,chiappori2017multi,chiappori2020multidimensional} have investigated the existence, uniqueness, and smoothness of the optimal map in this setting. To establish the uniqueness of the optimal transport map in the unequal-dimensional setting, we introduce three key concepts that characterize the structure of optimal solutions.

The non-degeneracy condition ensures that the cost function is sufficiently regular to yield meaningful transport maps, preventing degenerate solutions.
\begin{definition}[\bf \textit{Non-degeneracy condition}~\cite{chiappori2020multidimensional}]\label{def:non-deg}
A cost function $c(x, y)$ satisfies the non-degeneracy condition if
$\text{rank}\left(D^2_{xy} c(x,y)\right) = \min\{m, n\}$,
where $n$ and $m$ denote the dimensions of $x$ and $y$, respectively.
\end{definition}

We need conditions that characterize transport plans concentrated on deterministic mappings.
\begin{definition}[\bf \textit{Pure matching}~\cite{chiappori2016multidimensional}]\label{def:pureness}
A stable matching is said to be pure if the transport plan $T$ is concentrated on the graph of a function $T: \mathbb{R}^n \to \mathbb{R}^m$. This implies that almost all agents of type $x$ are matched exclusively with a single agent of type $y = T(x)$.
\end{definition}

A weaker form of the twist condition is also proposed for the unequal-dimensional case, ensuring injectivity from the higher-dimensional space to the lower one without full bijectivity.
\begin{definition}[\bf \textit{$x$-twist condition} \cite{chiappori2020multidimensional}]\label{def:x-twist}
The $x$-twist condition states that, for fixed $y \in \mathbb{R}^m$, the gradient of the cost function w.r.t. $x \in \mathbb{R}^n$ ($n > m$), $\nabla_x c(x,y)$, is injective in $x$, i.e., $y \mapsto \nabla_x c(x,y)$   
is injective for fixed $y$.
\end{definition}

We are now in a position to establish the uniqueness of the transport plan, which maps the coupled DA constraint to a unique crossing time.

\begin{proposition}[\bf \textit{Purity and uniqueness}]\label{prop:pureness}
If the cost function $c(t_0, t_1, t_2)$ satisfies both the non-degeneracy and twist condition, then the minimizer of Problem~\ref{prob:prob2} is unique. Moreover, there exist unique functions $T$ such that
$\pi = ({\rm Id}, T)_{\sharp}\mu^{0,\calT}$,
where ${\rm Id}_{\sharp}\mu^{0,\calT} = \mu^{0,\calT}$ is the identity map, and $T_{\sharp}\mu^{0,\calT} = \sigma$, with $\sigma(t_1)$ an absolutely continuous measure on $[0,t_f]$ with $\sigma(t_1)\leq  r$. 
\end{proposition}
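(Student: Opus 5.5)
We view Problem~\ref{prob:prob2} as an unequal-dimensional transport problem between the two-dimensional measure $\mu^{0,\calT}$ on $x=(t_0,t_2)$ and the one-dimensional crossing profile $\sigma$ on $y=t_1$, with cost $c(x,y)=\frac{w(v_0,v_1)}{t_1-t_0}+\frac{w(v_1,v_2)}{t_2-t_1}$. The argument has three steps: purity for fixed $\sigma$, uniqueness of $\sigma$, and then uniqueness of $\pi$.

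\emph{Step 1 (purity for fixed $\sigma$).} Let $\pi$ be a minimizer, which exists by the preceding existence theorem, and set $\sigma:=\iint_{t_0,t_2}\pi$. Then $\pi$ must be an optimal coupling between $\mu^{0,\calT}$ and $\sigma$ in the unconstrained two-marginal problem. Indeed, any cheaper coupling with the same marginals is still admissible, because the capacity constraint depends only on $\sigma$.

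We then check the hypotheses directly:
\[
\partial_{t_1}c=\frac{w(v_0,v_1)}{(t_1-t_0)^2}-\frac{w(v_1,v_2)}{(t_2-t_1)^2},
\]
so that
\[
D^2_{xy}c=\Big(\tfrac{2w(v_0,v_1)}{(t_1-t_0)^3},\ \tfrac{2w(v_1,v_2)}{(t_2-t_1)^3}\Big),
\]
whose rank is $1=\min\{2,1\}$ on the region $t_0<t_1<t_2$; this is Definition~\ref{def:non-deg}.

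For the twist condition in the form needed, we use that for fixed $x$ the map $t_1\mapsto\partial_{t_0}c=\frac{w(v_0,v_1)}{(t_1-t_0)^2}$ is strictly monotone on $(t_0,t_2)$. Hence $\nabla_x c(x,\cdot)$ is injective in $y$.

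With this injectivity, the standard Kantorovich duality argument applies. An optimal plan is supported on $\{(x,y):\nabla\varphi(x)=\nabla_x c(x,y)\}$, where $\varphi$ is a $c$-concave potential. The potential is differentiable $\mu^{0,\calT}$-a.e.\ provided $\mu^{0,\calT}$ is absolutely continuous (an assumption we add), since the cost is locally Lipschitz away from the diagonals. Solving for $y$ gives $\pi=(\mathrm{Id},T)_\sharp\mu^{0,\calT}$ with $T_\sharp\mu^{0,\calT}=\sigma$, and $T$ is unique given $\sigma$. This is the purity statement of Definition~\ref{def:pureness}.

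\emph{Step 2 (uniqueness of $\sigma$).} We argue by convexity, as in Theorems~\ref{thm:unique1} and~\ref{thm:unique1-prime}. Suppose $\pi^a,\pi^b$ are two minimizers with maps $T^a,T^b$. Then $\bar\pi=\tfrac12(\pi^a+\pi^b)$ is admissible, since the constraint set is convex and linear in $\pi$, and it is optimal because the objective is linear. By Step~1, $\bar\pi$ must also be concentrated on a graph over $\mu^{0,\calT}$. But
\[
\bar\pi=\tfrac12(\mathrm{Id},T^a)_\sharp\mu^{0,\calT}+\tfrac12(\mathrm{Id},T^b)_\sharp\mu^{0,\calT}
\]
is concentrated on a graph only if $T^a=T^b$ $\mu^{0,\calT}$-a.e. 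Hence $\pi^a=\pi^b$, and in particular $\sigma$ is unique. Absolute continuity of $\sigma$ with $\sigma\le r$ follows directly from the constraint $\sigma(dt_1)\le r\,dt_1$.

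\emph{Main obstacle.} The delicate point is Step~1. First, the cost blows up near $t_1=t_0$ and $t_1=t_2$, so the duality and differentiability of $\varphi$ require restricting to a compact region where $t_1-t_0$ and $t_2-t_1$ are bounded below; finiteness of the optimal cost forces the support there. Second, absolute continuity of $\mu^{0,\calT}$ is needed for a.e.\ differentiability. If $\mu^{0,\calT}$ is singular, for example supported on a Monge graph as in Fig.~\ref{fig:DA}(a), we would instead restrict the problem to that curve. There we would try to show directly that the cost satisfies a one-dimensional Monge (quasi-monotone) condition in the curve parameter versus $t_1$, and then apply Lemma~\ref{lemma:monotone}.
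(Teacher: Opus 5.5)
Your proposal is correct and follows essentially the paper's route: you verify non-degeneracy and injectivity of $t_1\mapsto\nabla_{(t_0,t_2)}c$, invoke the Gangbo--McCann/Chiappori--McCann--Pass purity theorem, and then exclude two minimizers by convexity. Your explicit absolute-continuity hypothesis on $\mu^{0,\calT}$ is exactly what the paper's citation tacitly needs, and some such hypothesis is unavoidable, since an atom of $\mu^{0,\calT}$ would force an atom in $\sigma$, contradicting $\sigma\le r\,dt_1$. Your Step~2 (the mixture $\bar\pi$ is optimal, hence pure, hence $T^a=T^b$ $\mu^{0,\calT}$-a.e.) is a cleaner rendering of the argument the paper defers to Theorems~\ref{thm:unique1} and~\ref{thm:unique1-prime}.

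One small correction: finite cost does not keep the support a uniform distance from $\{t_1=t_0\}\cup\{t_1=t_2\}$, so a single compact restriction is not justified. Instead, localize the a.e.-differentiability step to countably many products $B\times I$ compactly contained in $\{t_0<t_1<t_2\}$. Each product carries its own Lipschitz $c$-concave potential, and any two support points $(x,y),(x,y')$ lie in a common such product.
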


\begin{proof}
We begin by verifying the non-degeneracy condition of the cost functional \eqref{eq:cost-function-3} in Problem \ref{prob:prob2},
and introduce the notation $t_{da}$ for departure--arrival time pair $t_{da} := (t_0, t_2)$. The second-order mixed partial derivative matrix of the cost function is given by
\begin{align*}
\nabla_{t_{da}t_1} c(t_{da}, t_1) = \begin{bmatrix}
\partial^2 c/\partial t_0 \partial t_1\\[0.08in]
\partial^2 c/\partial t_2 \partial t_1
\end{bmatrix} =
2
\begin{bmatrix}
w(v_0,v_1)(t_1 - t_0)^{-3} \\[0.08in]
w(v_1,v_2)(t_2 - t_1)^{-3}
\end{bmatrix}.
\end{align*}
Since this matrix has rank 1, the non-degeneracy condition is satisfied.

Next, we establish that the $x$-twist condition holds. The gradient $\nabla_{t_{da}} c(t_{da}, t_1)$ is
\begin{align*}
\nabla_x c(x, y) = 
\begin{bmatrix}
\partial c/\partial t_0 \\[0.08in]
\partial c/\partial t_2
\end{bmatrix} =
\begin{bmatrix}
-w(v_0,v_1)(t_1 - t_0)^{-2} \\[0.08in]
\phantom{-}w(v_1,v_2)(t_2 - t_1)^{-2}
\end{bmatrix}.
\end{align*}
For any $t_0 < t_1 \neq t'_1 < t_2$, we compute the difference
\begin{align*}
\nabla_{t_{da}} c(t_{da}, t_1) - \nabla_{t_{da}} c(t_{da}, t'_1) =
\begin{bmatrix}
\frac{(t'_1 - t_1)(t'_1 + t_1 - 2t_0)}{(t_1 - t_0)^2(t_1 - t_0)^2} \\[0.08in]
\frac{(t'_1 - t_1)(2t_2 - t'_1 - t_1)}{(t'_1 - t_2)^2(t_1 - t_2)^2}
\end{bmatrix}.
\end{align*}
Since $t_0 <t_1, t'_1 < t_2$, it follows that
$\nabla_{t_{da}} c(t_{da}, t_1) \neq \nabla_{t_{da}} c(t_{da}, t'_1) \quad \text{for every } t_1 \neq t'_1$.
Thus, the $x$-twist condition holds, ensuring that the mapping $t_{da}:=(t_0,t_2) \mapsto \nabla_{t_{da}} c(t_{da}, t_1)$ is injective for each fixed $t_1$. By Definition~\ref{def:pureness}, the transport plan is supported on the graph of a function, implying uniqueness of the stable matching as in \cite[Corollary 4]{chiappori2016multidimensional}. The uniqueness of the relaxed marginal $\sigma(t_1)$ follows from Theorems \ref{thm:unique1} and \ref{thm:unique1-prime}, and we omit repetition.
\end{proof}

\begin{remark}[\bf \textit{Smoothness}]\label{remark:smooth}
When the $x$-twist condition is satisfied, it ensures uniqueness of the optimizer. However, the cost functional in Problem~\ref{prob:prob2} does not generally guarantee smoothness of the optimal coupling. For the coupling to be smooth, a nestedness condition must be met, which depends on the triplet $(c, \mu^{0,\calT}, \mu^1)$. In the case of arbitrary departure and arrival measures, one must verify whether the index or pseudo-index form for the cost is satisfied. We refer to \cite{dong2025task,chiappori2020multidimensional} for further discussion.
\end{remark}

\subsection{Multi-node line graph with coupled DA}

We now pass from the single-node setting of Problem~\ref{prob:prob2} to a line graph with $n$ intermediate vertices. Consider a path $\mathbf{p} = \{v_0,v_1,\dots,v_{\calT-1},v_{\calT}\}$, with $v_{\calT}:=v_{n+1}$. The coupled DA law $\mu^{0,\calT}$ still prescribes which departure–arrival pairs $(t_0,t_{\calT})$ must be realized, while the crossing times at the intermediate nodes are now chosen so as to respect this joint DA specification and, at the same time, satisfy the capacity constraints at each node along the path.

\begin{customproblem}{$\mathbf 2^\prime$}[\bf \textit{Coupled DA on a line graph}]\label{prob:prob2-prime}
Given a coupled departure–arrival law $\mu^{0,\calT}$ on $[0,t_f]^2$ and flow-rate bounds $r_1,\dots,r_n > 0$ at the intermediate nodes, determine a probability measure $\pi(t_0,\dots,t_{\calT-1},t_\calT)$ on $\mathcal M_+([0,t_f]^{n+2})$ that minimizes
\begin{subequations}
\begin{align}\label{eq:obj2-prime}
\int_{t_0,\dots,t_n,t_\calT}\!\!\!\!\!\!\!\!\!\!\!\!\!\!\!\!\!\!
c^{(\mathbf{p})}(t_0,\dots,t_\calT)\,
\pi(dt_0,\dots,dt_\calT),
\end{align}
where $c^{(\mathbf{p})}$ is the path cost \eqref{eq:cost}, for $t_0 < t_1 < \dots < t_{\calT-1} < t_\calT$, and subject to coupled DA constraint
\begin{align}\label{eq:couple-da}
\int_{t_1,\dots,t_{\calT-1}} \!\!\!\!\!\!\!\!\!\!\!\!\! \pi(dt_0,\dots,dt_\calT)
= \mu^{0,\calT}(dt_0,dt_\calT),
\end{align}
and the nodal flow-rate constraints
\begin{align}\label{eq:couple-bound}
\int_{t_0,\dots,t_\calT/\{t_\ell\}} \!\!\!\!\!\!\!\!\!\!\!\!\!\!\!\!\!
\pi(dt_0,\dots,dt_\calT)
:= \sigma^\ell(dt_\ell) \le r_\ell\,dt_\ell,
\end{align}
\end{subequations}
for $\ell = 1,\dots,n.$
\end{customproblem}
\setcounter{problem}{2}

The admissible set for Problem~\ref{prob:prob2-prime} reads 
$\Pi := \big\{\pi \; \big|\; \pi \mbox{ satisfies } \eqref{eq:couple-da} \mbox{ and } \eqref{eq:couple-bound} \big\}$.
As in the independent DA case, one can construct intermediate crossing laws
$\sigma^1,\dots,\sigma^n$ along the path that satisfy the feasibility condition in Lemma~\ref{lemma:feasible-order} and are normalized so that
$\int_0^{t_f} \sigma^\ell(dt) = 1$ for all $\ell$. The gluing lemma then provides at least one coupling $\pi\in\Pi$ with these marginals.

We now turn to uniqueness and structure. Introduce the DA variable $t_{da} := (t_0,t_\calT) \in [0,t_f]^2$, and gather the crossing times in $\mathbf t := (t_1,\dots,t_{\calT-1}) \in [0,t_f]^n$. Viewed as a function of $(t_{da},\mathbf t)$, the cost $c^{(\mathbf{p})}$ fits the unequal-dimensional OT setting of Section~\ref{sec:couple}, where $t_{da}$ and $\mathbf t$ play the roles of the higher- and lower-dimensional variables.

\begin{theorem}[\bf \textit{Purity and uniqueness}]\label{thm:unique-coupled-line}
Assume that the path cost $c^{(\mathbf{p})}(t_0,\dots,t_{\calT-1},t_\calT)$ in \eqref{eq:cost} satisfies the non-degeneracy condition and the $x$-twist condition with respect to $t_{da}=(t_0,t_\calT)$. Then, whenever Problem~\ref{prob:prob2-prime} is feasible, it admits a unique minimizer $\pi$. Moreover, there exists a unique measurable map $T^{(\mathbf p)} : [0,t_f]^2 \to [0,t_f]^n,$ such that
\[
\pi = \big({\rm Id},T^{(\mathbf p)}\big)_{\sharp}\mu^{0,\calT},
\]
and the induced crossing-time marginals $\sigma^\ell$ are absolutely continuous and satisfy $\sigma^\ell(t_\ell)\le r_\ell$ for all $\ell=1,\dots,n$.
\end{theorem}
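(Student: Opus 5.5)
The plan is to mirror the single-node argument of Proposition~\ref{prop:pureness}, now with the lower-dimensional variable $\mathbf t=(t_1,\dots,t_{\calT-1})$ in place of $t_1$. The first observation is that \eqref{eq:couple-da} fixes the full $t_{da}$-marginal. Hence any admissible $\pi$ disintegrates as $\pi(dt_{da},d\mathbf t)=\mu^{0,\calT}(dt_{da})\,\pi_{t_{da}}(d\mathbf t)$. Proving the statement therefore amounts to showing that, for $\mu^{0,\calT}$-a.e.\ $t_{da}$, the conditional law $\pi_{t_{da}}$ is a Dirac mass $\delta_{T^{(\mathbf p)}(t_{da})}$, and that the map $T^{(\mathbf p)}$ is the same for every minimizer.

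Existence comes first. The admissible set $\Pi$ is non-empty by the gluing construction described just before the theorem. It is tight and narrowly closed, since the bounds $\sigma^\ell\le r_\ell\,dt_\ell$ are preserved under weak limits. The cost $c^{(\mathbf p)}$ is lower semicontinuous with values in $[0,\infty]$, because each term $w/(t_\ell-t_{\ell-1})$ blows up as consecutive times merge. The direct method then yields a minimizer $\pi^\star$. Absolute continuity of each $\sigma^\ell$, with density at most $r_\ell$, is immediate from \eqref{eq:couple-bound}.

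For purity, I would fix the crossing marginals $\sigma^1,\dots,\sigma^n$ of $\pi^\star$. The capacity constraint is a constraint on the marginals only, so $\pi^\star$ also solves an ordinary multi-marginal OT problem in which the marginals $\mu^{0,\calT},\sigma^1,\dots,\sigma^n$ are prescribed. Kantorovich duality for this problem gives potentials $\phi(t_{da})$ and $\psi_\ell(t_\ell)$. These satisfy $\phi(t_{da})+\sum_\ell\psi_\ell(t_\ell)\le c^{(\mathbf p)}$, with equality on $\supp\pi^\star$.

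Next, $c^{(\mathbf p)}$ is locally Lipschitz on the open simplex $t_0<t_1<\dots<t_\calT$, and finiteness of the cost forces $\pi^\star$ to live there. The potential $\phi$, a $c$-transform, is therefore differentiable $\mu^{0,\calT}$-a.e., at least when $\mu^{0,\calT}$ is absolutely continuous. I would state this as an implicit regularity assumption, as in \cite{chiappori2020multidimensional}. At such a point, $\nabla\phi(t_{da})=\nabla_{t_{da}}c^{(\mathbf p)}(t_{da},\mathbf t)$ for every $\mathbf t$ in the fiber. Now
\[
\nabla_{t_{da}}c^{(\mathbf p)}=\big(w(v_0,v_1)(t_1-t_0)^{-2},\;-w(v_{\calT-1},v_\calT)(t_\calT-t_{\calT-1})^{-2}\big),
\]
so by the computation in Proposition~\ref{prop:pureness} this gradient determines $t_1$ and $t_{\calT-1}$. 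This is where the assumed $x$-twist condition enters.

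The main obstacle is the interior coordinates $t_2,\dots,t_{\calT-2}$ when $n\ge3$, since $\nabla_{t_{da}}c$ does not see them. I would settle these with a chain argument. Given $t_1$ and $t_{\calT-1}$, the remaining coordinates must be optimal in the fiber, because the cost is separable along the path. Separability, together with the Monge (quasi-monotone) property of each pairwise term established in Lemma~\ref{lemma:monotone}, lets me invoke the generalized Monge structure used in Theorem~\ref{thm:unique1-prime}. Each consecutive pair $(t_{\ell},t_{\ell+1})$ is then coupled by a monotone map under the fixed marginals. Composing these maps from the determined $t_1$ forces $t_2,\dots,t_{\calT-2}$, which gives a measurable $T^{(\mathbf p)}$.

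Uniqueness then follows by the convex-combination argument of Theorems~\ref{thm:unique1} and \ref{thm:unique1-prime}. If $\pi^a$ and $\pi^b$ are both optimal, then $\tfrac12(\pi^a+\pi^b)$ is admissible, since the constraints are linear, and it is also optimal. By the purity just proved it must be concentrated on a graph over $t_{da}$. This forces $T^{(\mathbf p),a}=T^{(\mathbf p),b}$ $\mu^{0,\calT}$-a.e., and hence $\pi^a=\pi^b$.
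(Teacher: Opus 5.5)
Your existence step, the reduction to a multi-marginal problem with the crossing marginals $\sigma^\ell$ frozen, the use of $\nabla\phi(t_{da})$ to pin down $t_1$ and $t_n$, and the closing mixture argument are all sound. The gap is the chain step for the interior coordinates $t_2,\dots,t_{n-1}$ when $n\ge3$; for $n\le2$ your argument is complete, given your regularity assumption on $\mu^{0,\calT}$. The generalized Monge structure behind Theorem~\ref{thm:unique1-prime} needs a \emph{chain} cost with only one-dimensional marginals prescribed, so that optimal pairwise couplings can be re-glued. Here $t_{da}$ is linked to both $t_1$ and $t_n$, so the interaction graph is a cycle. Once $t_1,t_n$ are functions of $t_{da}$, the middle block carries a prescribed \emph{joint} law of $(t_1,t_n)$. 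It is therefore again a coupled-DA problem, and its consecutive pairs need not be carried by graphs at all. For a concrete failure, take $n=3$, equal weights, a DA law with bounded density on $\{t_0<t_\calT\}$, and capacities large enough to be slack. The optimizer is then the pointwise minimizer $t_\ell=t_0+\ell(t_\calT-t_0)/4$. So $(t_1,t_2)$ is an invertible linear image of $t_{da}$, and its law is absolutely continuous on $\mathbb R^2$, supported on no graph, monotone or otherwise. The underlying error is that $t_{\ell+1}$ is a function of $(t_{\ell-1},t_\ell)$, not of $t_\ell$ alone. The paper's proof does not treat the interior coordinates separately either. It checks injectivity of $t_{da}\mapsto\nabla_{t_{da}}c^{(\mathbf p)}(t_{da},\mathbf t)$ for fixed $\mathbf t$ and then quotes purity from the unequal-dimensional literature for the whole vector $\mathbf t$. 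Your observation that $\nabla_{t_{da}}c^{(\mathbf p)}$ is blind to $t_2,\dots,t_{n-1}$ is a genuine point the paper passes over: the injectivity in $\mathbf t$ that those purity results rely on fails for $n\ge3$.

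The repair is to use the one-dimensional potentials $\psi_\ell$ rather than pairwise rearrangement. From $\phi+\sum_k\psi_k\le c^{(\mathbf p)}$ with equality on the support, at $\pi$-a.e.\ point $\psi_\ell$ is touched from above at $t_\ell$ by $s\mapsto c^{(\mathbf p)}(\dots,t_{\ell-1},s,t_{\ell+1},\dots)+\mathrm{const}$. This function is uniformly $C^2$ near $t_\ell$ once all consecutive gaps exceed a fixed $\delta>0$. A localized semiconcave-envelope argument, followed by $\delta\downarrow0$, shows that these touching functions share one slope at $t_\ell$ except for countably many values of $t_\ell$. That exceptional set is $\sigma^\ell$-null, because the capacity bound makes $\sigma^\ell$ absolutely continuous. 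Hence there are functions $p_\ell$ with $p_\ell(t_\ell)=-w(v_{\ell-1},v_\ell)(t_\ell-t_{\ell-1})^{-2}+w(v_\ell,v_{\ell+1})(t_{\ell+1}-t_\ell)^{-2}$ $\pi$-a.e. The right-hand side is strictly decreasing in $t_{\ell+1}$, so $(t_{\ell-1},t_\ell)$ determines $t_{\ell+1}$. Start from $t_0$ and $t_1$, the latter fixed by $\partial_{t_0}\phi=w(v_0,v_1)(t_1-t_0)^{-2}$; this is where your regularity assumption on $\mu^{0,\calT}$ is still needed. Forward induction then gives $\mathbf t=T^{(\mathbf p)}(t_{da})$ $\pi$-a.e. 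Your mixture argument then yields uniqueness as written. It needs only that every optimizer is pure, not the monotone rearrangement the paper invokes.
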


\begin{proof}
Introduce $t_{da}=(t_0,t_\calT)$ and $\mathbf t=(t_1,\dots,t_{\calT-1})$. The path cost \eqref{eq:cost} depends on $t_0$ only through the first segment and on $t_\calT$ only through the last. Hence the mixed second derivatives with respect to $t_{da}$ and $\mathbf t$ vanish everywhere except in the entries $(t_0,t_1)$ and $(t_{\calT-1},t_\calT)$, where they are strictly positive on the feasible set $t_0 < t_1 < \dots < t_{\calT-1} < t_\calT$. The mixed derivative matrix $\nabla_{t_{da}\mathbf t}c^{(\mathbf p)}$ therefore has rank $\min\{2,n\}$, so the non-degeneracy condition of Definition~\ref{def:non-deg} holds.

For the $x$-twist condition, again, only the first and last segments matter. The gradient with respect to $t_{da}$ is
\begin{align*}
\nabla_{t_{da}} c^{(\mathbf p)}(t_{da},\mathbf t)
=
\begin{bmatrix}
-w(v_0,v_1)(t_1-t_0)^{-2}\\[0.08in]
w(v_n,v_\calT)(t_\calT-t_{\calT-1})^{-2}
\end{bmatrix}.
\end{align*}
For fixed $\mathbf t$, the first component is strictly decreasing in $t_0$ and the second is strictly increasing in $t_\calT$. Thus, for any $(t_0,t_\calT)\neq(t_0',t_\calT')$ with the same $\mathbf t$,
\[
\nabla_{t_{da}} c^{(\mathbf p)}(t_0,t_\calT,\mathbf t)
\neq
\nabla_{t_{da}} c^{(\mathbf p)}(t_0',t_\calT',\mathbf t),
\]
so $t_{da}\mapsto \nabla_{t_{da}} c^{(\mathbf p)}(t_{da},\mathbf t)$ is injective and the $x$-twist condition (Definition~\ref{def:x-twist}) is satisfied.

Non-degeneracy and $x$-twist place us in the unequal-dimensional OT setting of
\cite{chiappori2016multidimensional,chiappori2017multi,chiappori2020multidimensional}. In particular, any stable matching between $t_{da}$ and $\mathbf t$ is pure, so any minimizer of Problem~\ref{prob:prob2-prime} can be written as
\[
\pi = \big({\rm Id},T^{(\mathbf p)}\big)_{\sharp}\mu^{0,\calT},
\]
with unique measurable map $T^{(\mathbf p)}$. The capacity constraints act only on the relaxed marginals $\displaystyle \sigma^\ell$, which are the push-forwards of $\mu^{0,\calT}$ under the components of $T^{(\mathbf p)}$. If there were two distinct minimizers with different maps (and hence different collections of crossing laws), their convex combination would still satisfy all marginal and capacity constraints but would no longer be supported on the graph of a single function of $t_{da}$. As in the proof of Theorem~\ref{thm:unique1-prime}, a monotone rearrangement argument would then yield a strictly cheaper coupling, contradicting optimality. Thus $T^{(\mathbf p)}$, the induced crossing marginals $\sigma^\ell$, and the minimizer $\pi$ are all unique.
\end{proof}

\section{Multi-path generalization and optimization methodology}\label{sec:general}

We recall that $\calV^-\subseteq\calV$ and $\calV^+\subseteq\calV$ denote the sets of source and sink nodes, and that $\calP(\calV^-,\calV^+)$ is a collection of admissible paths from sources to sinks. 
For a given node $v\in\calV$, we write $\calP(v)$ for the subsets of paths in $\calP(\calV^-,\calV^+)$ that contain the node $v$.
The general form of Problem~\ref{prob:problem1} is then expressed in terms of a path-wise transportation plan $\pi^{(\bp)} \in \mathcal{M}_{+}\big([0,t_f]^{n_\bp}\big)$: for each admissible path (line graph) $\bp \in \calP(\calV^-,\calV^+)$, the joint coupling measure $\pi^{(\bp)}$ describes the evolution of mass across the $n_\bp$ vertices along $\bp$, and the objective is to minimize the total transportation cost over all commodities and all paths. 
The problem is thus formulated as follows.

\begin{problem}[\bf \textit{Path-wise DA-constrained transport}]\label{prob:problem3}
Given probability measures $\mu^0(t_0,v^-_i)$ for $v^-_i\in \calV^-$ and $\mu^\calT(t_\calT,v^+_j)$ for $v^+_j\in \calV^+$, and flow-rate bounds $r(t_\ell,v_\ell)>0$ at every intermediate node $v_\ell$. We want to determine $\pi^{(\bp)}\in\calM_{+}([0,t_f]^{n_\bp})$ for all $\bp\in\calP(\calV^-,\calV^+)$ that minimizes
\begin{subequations}
\begin{align*}
\sum_{\bp\in\calP(\calV^-,\calV^+)} \!\!\!\!\!\!\!\! \big\langle c^{(\bp)},\pi^{(\bp)}\big\rangle
:=\!\!\!\!\!\!\!\!
\sum_{\bp\in\calP(\calV^-,\calV^+)} \int c^{(\bp)}\,d\pi^{(\bp)},
\end{align*}
subject to the departure and arrival marginals
\begin{align*}
\sum_{\bp\in\calP(v^-_i)} P_{v^-_i}\big(\pi^{(\bp)}\big)=\mu^0(t_0,v^-_i), \quad v^-_i\in\calV^-\\
\sum_{\bp\in\calP(v^+_j)} P_{v^+_j}\big(\pi^{(\bp)}\big)=\mu^\calT(t_\calT,v^+_j) \quad v^+_j\in\calV^+,
\end{align*}%
and the flow rate capacity constraints
\[
\sum_{\bp\in\calP(v_\ell)} P_{v_\ell}\big(\pi^{(\bp)}\big)\;\le\; r(t_\ell,v_\ell)\,dt_\ell, \quad \mbox{ for } v_\ell\in\calV,
\]
where $P_{v}:\mathcal{M}_{+}\big([0,t_f]^{n_\bp}\big)\to \mathcal{M}_{+}([0,t_f])$ denotes the projection onto the marginal associated with node $v$. 
\end{subequations}

\end{problem}

A motivation for this formulation is that given an initial and final destination, there are typically a limited number of effective paths. However, for a single path with $n$ nodes, the size of the corresponding transportation map $\pi^{(\bp)}$ grows exponentially as the number of temporal marginals at each node increases. To address this challenge, we develop methods based on multimarginal optimal transport. In Section~\ref{sec:sinkhorn}, we review multi-marginal entropic regularization and its Sinkhorn implementation. In Section~\ref{sec:path-sinkhorn}, we adapt this framework to a path-wise Sinkhorn algorithm tailored to DA-constrained transport on networks.

\subsection{Multi-marginal Sinkhorn algorithm, recap}\label{sec:sinkhorn}

For the MMOT problem, the discretized formulation with objective functional in \eqref{eq:multi-marginal-obj} and admissible set \eqref{eq:multi-marginal-set} yields a large-scale linear program. In the multi-marginal setting, the computational cost typically grows exponentially with the number of marginals, so direct solvers quickly become impractical. The situation improves substantially when the cost has a graphical structure, for instance, when it decomposes into pairwise terms,
$$
c(x_0,\dots,x_\calT)
= c(x_0,x_1)+c(x_1,x_2)+\dotsb+c(x_{\calT-1},x_\calT).
$$

In such cases, entropic regularization combined with Sinkhorn-type scaling has become a standard tool for large-scale MMOT. Recent work shows that tensor factorization can further accelerate these methods by sharing structure across marginals and removing redundant projections within each iteration, see e.g. \cite{elvander2020multi,haasler2021multimarginal,haasler2024scalable} for details and applications.

By adding an entropic penalty to the original problem, the regularized version can be solved using an alternating minimization scheme \cite{peyre2019computational}. The key idea of entropic regularized OT, is to introduce the entropic penalty term of $\pi \in \mathbb{R}_+^{n_0\times\cdots\times n_\calT}$ defined as 
\begin{align}\label{eq:entropic-term}
    D(\pi):= - \sum_{i_0,\dots,i_\calT} \pi_{i_0,\dots,i_\calT}\Big(\log \pi_{i_0,\dots,i_\calT}-1\Big),
\end{align}
with the convention $0 \log(0) = 0$. With a regularization parameter $\epsilon$ to penalize the entropic term, the entropic regularization problem can be written as
\begin{align}\label{prob:entropy}
\pi_\epsilon^\star:=   \argmin_{\pi \in \Pi(\mu^0,\mu^1,\dots,\mu^\calT)} \ \langle c,\pi \rangle - \epsilon D(\pi).
\end{align}
The unique solution $\pi^\star$ of \eqref{prob:entropy} converges to the optimal solution with maximal entropy within the set of all optimal solutions $\pi\in\Pi$, see \cite[Proposition 4.1]{peyre2019computational}. 

To obtain an iterative updating scheme for finding the optimizer of $\pi^\star_\epsilon$, we derive the Lagrangian 
\begin{align*}
\calL(\pi,\lambda^{0,\dots,\calT}) =
\langle c,\pi \rangle - \epsilon D(\pi)
+\sum_{\ell=0}^{\calT} \lambda^{\ell} \left(P_\ell(\pi) -\mu^\ell\right),
\end{align*}
where, for each $\ell$, the marginal $\mu^\ell \in \mathbb{R}_+^{n_\ell}$ and Lagrange dual variable $\lambda^\ell \in \mathbb{R}^{n_\ell}$ for the discretized marginal constraint $P_\ell(\pi)\in\mathbb{R}_+^{n_\ell}$. Then, by taking the first-order optimality condition, we have 
\begin{align*}
&\partial\calL(\pi,\lambda^0,\lambda^1,\dots,\lambda^\calT)/\partial \pi = 0,\\
&\partial\calL(\pi,\lambda^0,\lambda^1,\dots,\lambda^\calT)/\partial \lambda^\ell = 0,
\end{align*}
which gives that the optimizer $\pi$ has a closed-form expression 
$$\pi_{i_0,\dots,i_\calT} = \exp(-c_{i_0,\dots,i_\calT}/\epsilon)\cdot \prod_{\ell=0}^{\calT}\exp(-\lambda_{i_\ell}^{\ell}/\epsilon).$$

By introducing the notation $\bK:=\exp(-c/\epsilon)$ and $\bu_\ell:= \exp(-\lambda_{\ell}/\epsilon)$, the optimizer can be written in the more compact form\footnote{In the language of Schr\"odinger bridge problem, where only the entropic term is considered, taking the limit $\epsilon \to +\infty$ reduces the problem to maximum likelihood estimation. In this setting, the reference measure $\bK$ acts as the prior, while the solution $\pi$ corresponds to the posterior.}
\begin{align}\label{eq:close-form}
\pi = \bK \odot \left(\bu_0 \otimes \bu_1 \dots \otimes \bu_\calT \right)
\end{align}
with $\odot$ the element-wise product and $\otimes$ as the outer product. The dual variables $\lambda_0,\lambda_1,\dots,\lambda_\calT$ have to satisfy the marginal constraints, which follow after we derive the dual by substituting \eqref{eq:close-form} into the Lagrangian
\begin{align*}
\argmax_{\lambda^0,\lambda^1,\dots,\lambda^\calT} - \epsilon \bK \odot \left(\bu_0 \otimes \bu_1 \dots \otimes \bu_\calT \right) + \sum_{i=0}^{\ell} \langle \bu_\ell, \mu^\ell \rangle.
\end{align*}
The updating strategy thus initializes $\bu_0=\ones_{n_0}\in\mathbb{R}_+^{n_0}$, 
and then for $\ell=0:\calT$, we iteratively compute
\begin{equation}\label{eq:time-update}
\bu_\ell \longleftarrow \mu^\ell\oslash P_{\ell}(\pi)
\end{equation}
until convergence, where $\oslash$ denotes element-wise division. The above updating strategy is known as the Sinkhorn algorithm \cite{dong2025data,peyre2019computational,sinkhorn1964relationship}, which also can be viewed as matrix scaling, by iteratively modifying $\bu_0,\dots,\bu_\calT$ to satisfy the marginal constraint. In addition, for the relaxed marginal, i.e., $P_\ell(\pi) \leq \mu^\ell$, the updates are 
$$
\bu_\ell \longleftarrow \max \Big\{ \mu^\ell\oslash P_{\ell}(\pi),\; \mathbf 1\Big\}.
$$
This approach is analogous to the standard Sinkhorn method, and thus inherits a linear convergence rate \cite{haasler2021multi}.

\subsection{Path-wise Sinkhorn algorithm}\label{sec:path-sinkhorn}
We next derive a Sinkhorn-type scaling scheme for the path-wise formulation in Problem~\ref{prob:problem3}. Each admissible path $\bp$ carries its own coupling $\pi^{(\bp)}$ over the crossing times of the nodes it visits, while the source and sink marginals and interior-node capacities couple \emph{across} paths through aggregated projections. Entropic regularization yields a multiplicative structure and simple alternating matrix rescaling, in the spirit of the multi-marginal case, with the difference that each interior node shares one multiplier wherever it appears.

In practice it is often unnecessary to work with the full all-path formulation. Instead, one may restrict attention to a prescribed family of admissible paths and enlarge it only when needed, in the spirit of column generation for large-scale linear programs \cite{friesecke2022genetic,nemhauser2012column}. In network flow models with path variables, such enrichment is typically guided by a shortest-path-type pricing subproblem under dual weights \cite{engineer2008shortest,moradi2015bi}.

For a discrete path plan $\pi^{(\bp)} \in \mathbb{R}_+^{n_0 \times \cdots \times n_{n_\bp} \times n_{\calT}}$, we consider the path-wise entropy $D(\pi^{(\bp)})$ defined in \eqref{eq:entropic-term}, and add it to the cost functional in Problem~\ref{prob:problem3}, yielding its entropically regularized formulation. By introducing dual variables $\alpha_{v^-}(t_0)$, $\beta_{v^+}(t_{\calT})$, and $\gamma_{v}(t_\ell)\ge 0$ corresponding to the source node, sink node, and flow rate constraints, respectively, the Lagrangian can be written as


\begin{align*}
\mathcal L(\{\pi^{(\bp)}\},&\alpha,\beta,\gamma)
=
\sum_{\bp}
\Big\langle c^{(\bp)} + \varepsilon\big(\log \pi^{(\bp)} - 1\big),\,\pi^{(\bp)} \Big\rangle 
\hspace{-0.4pt}-\sum_{v^-\in \calV^{-}} \Big\langle \alpha_{v^-},\!\!\!\!
\sum_{\bp\in\calP(v^-)} P_{v^-}\big(\pi^{(\bp)}\big) - \mu^0(\cdot,v^-) \Big\rangle \nonumber\\
&
-\sum_{v^+\in \calV^+} \Big\langle \beta_{v^+},\!\!\!\!
\sum_{\bp\in\calP(v^+)} P_{v^+}\big(\pi^{(\bp)}\big) - \mu^{\calT}(\cdot,v^+) \Big\rangle 
-\sum_{v\in \calV} \Big\langle \gamma_v,\!\!\!\!
\sum_{\bp\in\calP(v)} P_{v}\big(\pi^{(\bp)}\big) - r(\cdot,v) \Big\rangle ,
\end{align*}
where $d\mathbf t$ denotes integration over $d\mathbf t:=(t_0,\{t_\ell:\ v_\ell\in\bp\},t_{\calT})$ and $P_0$, $P_{\calT}$, $P_\ell$ are the marginal operators extracting the $t_0$, $t_{\calT}$, $t_\ell$ marginals of $\pi^{(\bp)}$. The First-order optimality condition of $\mathcal L$ with respect to $\pi^{(\bp)}$ gives
\begin{align*}
0
= c^{(\bp)}+\varepsilon \log \pi^{(\bp)}
\!-\!\alpha_{v^-}(t_0)
\!-\!\!\sum_{v_\ell\in\bp}\gamma_\ell(t_\ell)
\!-\!\beta_{v^+}(t_{\calT}) .
\end{align*}
Hence, the path-wise optimizer admits a closed-form solution in the form
\begin{align*}
\pi^{(\bp)}(t_0,\{t_\ell\},t_{\calT}) =\exp\!\left(
\frac{\alpha_{v^-(\bp)}(t_0)
+\sum_{v_\ell\in\bp}\gamma_\ell(t_\ell)
+\beta_{v^+(\bp)}(t_{\calT})
- c^{(\bp)}}{\varepsilon}
\right).
\end{align*}

Next, define the Gibbs kernel and multiplicative scalings
$\bK^{(\bp)}(t_0,\{t_\ell\},t_{\calT}):=\exp\!\big(-c^{(\bp)}(t_0,\{t_\ell\},t_{\calT})/\varepsilon\big)$, together with
\begin{align*}
\bu_{v^-}\!\!:=\exp\!\big(\frac{\alpha_{v^-}}{\varepsilon}\big),  \bw_v:=\exp\!\big(\frac{\gamma_v}{\varepsilon}\big), \mbox{ and } \bv_{v^+}:=\exp\!\big(\frac{\beta_{v^+}}{\varepsilon}\big).
\end{align*}
Then the path-wise plan may be factorized as (cf.
\cite{elvander2020multi})
\begin{equation}\label{eq:closed-form}
\pi^{(\bp)}=\bK^{(\bp)}\;\bu_{v^-}(t_0)\;\Big(\prod_{v\in\bp} \bw_{v}(t)\Big)\;\bv_{v^+}(t_{\calT}).
\end{equation}

Fix a path $\bp$, and define the path-wise flux profiles as partial integrals of the Gibbs kernel with all other factors, holding one variable and integrating over the rest, so that
\begin{align}\label{eq:flux-profile}
&A^{(\bp)}_{v^-}(t_0):=\int_{\{t_\ell\},t_\calT} \!\!\!\!\!\!\!\!\!\! \bK^{(\bp)}\Big(\prod_{v_\ell\in\bp} \bw_{v_\ell}(t_\ell)\Big) \bv_{v^+}(t_{\calT}),\nonumber \\
&A^{(\bp)}_{v^+}(t_{\calT}):=\int_{t_0,\{t_\ell\}} \!\!\!\!\!\!\!\!\!\! \bK^{(\bp)}\bu_{v^-}(t_0)\Big(\prod_{v_\ell\in\bp} \bw_{v_\ell}(t_\ell)\Big),\\
&A^{(\bp)}_{v}(t_\ell):=\int_{t_0,\{t_{\ell'\neq \ell}\},t_{\calT}} \!\!\!\!\!\!\!\!\!\!\!\!\!\!\!\!\!\!\!\!\! \bK^{(\bp)}\bu_{v^-}(t_0)\bv_{v^+}(t_{\calT})
\!\!\!\!\prod_{v_{\ell'}\in\bp,\ \ell'\neq \ell}\!\!\!\!\!\!\! \bw_{v_{\ell'}}(t_{\ell'}).\nonumber
\end{align}
The model marginals are obtained by summing these flux profiles over paths incident to each node and multiplying by the local scaling as 
\begin{align}\label{eq:aggregate}
m_{v^-}(t_0)
&:= \sum_{\bp\in\calP(v^-)} \bu_{v^-}(t_0)\,A^{(\bp)}_{v^-}(t_0),\nonumber\\
m_{v^+}(t_{\calT})
&:= \sum_{\bp\in\calP(v^+)} \bv_{v^+}(t_{\calT})\,A^{(\bp)}_{v^+}(t_{\calT}),\\
m_{v}(t)
&:= \sum_{\bp\in\calP(v)} \bw_{v}(t)\,A^{(\bp)}_{v}(t),
\qquad v\in\calV.\nonumber
\end{align}


The corresponding path-wise Sinkhorn scheme can be summarized as in Algorithm \ref{alg:sinkhorn_graph}.
\begin{algorithm}[htb!]
\caption{Path-wise Sinkhorn with nodal capacities}\label{alg:sinkhorn_graph}
\begin{algorithmic}[1]
\STATE \textbf{Initialize} $\bu_{v^-}(t_0)\!=\!\mathbf 1$, $\bv_{v^+}(t_{\mathcal T})\!=\!\mathbf 1$, $\bw_v(t)\!=\!\mathbf 1$
\WHILE{ not converged }
\STATE Compute flux profile along each path $\bp$ as in \eqref{eq:flux-profile}.
\STATE Aggregate marginals at nodes \eqref{eq:aggregate}, and compute independent DA projections
{\setlength{\abovedisplayskip}{5pt}%
 \setlength{\belowdisplayskip}{5pt}%
$$
\bu_{v^-} \leftarrow \frac{\mu^0(\cdot,v^-)}{m_{v^-}} 
\ \mbox{ and } \ 
\bv_{v^+} \leftarrow \frac{\mu^{\mathcal T}(\cdot,v^+)}{m_{v^+}}.
$$}
\STATE Compute capacity projection at node $v$, i.e.,
{\setlength{\abovedisplayskip}{5pt}%
 \setlength{\belowdisplayskip}{5pt}%
$$
\bw_v \leftarrow \min\!\Big\{r_v \oslash m_v, \ \mathbf 1 \Big\}\qquad \text{for each } v \in \calV.
$$}
\ENDWHILE
\STATE Compute plan $\pi^{(\bp)}$ for path $\bp$ as in \eqref{eq:closed-form}.

\end{algorithmic}
\end{algorithm}

\begin{theorem}[\bf \textit{Linear convergence}]\label{thm:convergence}
Algorithm~\ref{alg:sinkhorn_graph} converges to the unique optimal plan at a linear rate. 
\end{theorem}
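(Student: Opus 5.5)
We will prove Theorem~\ref{thm:convergence} by recasting Algorithm~\ref{alg:sinkhorn_graph} as block-coordinate ascent on the dual of the entropically regularized version of Problem~\ref{prob:problem3}. Because the primal objective $\sum_{\bp}\langle c^{(\bp)},\pi^{(\bp)}\rangle-\varepsilon\sum_\bp D(\pi^{(\bp)})$ is $\varepsilon$-strongly convex on the (discretized, bounded) feasible set and the constraints are linear, the regularized problem has a unique optimizer. In what follows, ``the unique optimal plan'' means this unique regularized optimizer $\{\pi^{(\bp)}_\varepsilon\}$; if needed, its relation to the unregularized problem will be handled afterwards through \cite[Proposition 4.1]{peyre2019computational}.

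\textbf{Step 1: the dual function.} Substituting the closed form \eqref{eq:closed-form} into the Lagrangian gives the concave dual
\[
G(\alpha,\beta,\gamma)=\sum_{v^-}\langle\alpha_{v^-},\mu^0(\cdot,v^-)\rangle+\sum_{v^+}\langle\beta_{v^+},\mu^\calT(\cdot,v^+)\rangle-\sum_v\langle\gamma_v,r(\cdot,v)\rangle-\varepsilon\sum_\bp\big\langle \bK^{(\bp)},\bu\otimes\textstyle\prod\bw\otimes\bv\big\rangle ,
\]
with $\gamma_v\le0$. The sign convention for $\gamma$ must be fixed so that the update $\bw_v\leftarrow\min\{r_v\oslash m_v,\mathbf 1\}$ corresponds to $\gamma_v\le 0$.

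\textbf{Step 2: exactness of the block updates.} We check that each line of the algorithm exactly maximizes $G$ over one block while the others are held fixed. For the source block $\alpha_{v^-}$, the variables decouple because each $\bp\in\calP(v^-)$ contains the factor $\bu_{v^-}$ exactly once. Setting the gradient to zero gives $\bu_{v^-}\sum_\bp A^{(\bp)}_{v^-}=\mu^0$, which is the stated update; the sink block is identical. For the capacity block, the maximization is a one-dimensional concave problem in each time bin with a sign constraint, and its solution is the clipped ratio $\min\{r_v\oslash(m_v/\bw_v),\mathbf 1\}$. A subtlety arises here: if a path visits a node only once, $m_v$ is linear in $\bw_v$, so the update is exact when each $m_v$ is computed with the current $\bw_v$ factored out. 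We will also interpret ``for each $v$'' as sequential over nodes, or as parallel over nodes that share no path. Otherwise, a parallel Jacobi update over coupled blocks would require a separate argument, and to avoid this we will state the result for the Gauss--Seidel ordering.

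\textbf{Step 3: linear rate.} This is the main obstacle. We will invoke the convergence theory for cyclic block-coordinate maximization of functions of the form $g(E\lambda)+\langle b,\lambda\rangle$, where $g$ is strictly convex, smooth, and has locally Lipschitz gradient, and the constraints are polyhedral (Luo--Tseng). Here $g$ is the log-sum-exp-type term $\varepsilon\sum_\bp\langle\bK^{(\bp)},e^{(E\lambda)/\varepsilon}\rangle$, and $E$ is the linear map that sends the dual variables to the path tensors. That theory yields linear convergence of the dual objective, and hence of the iterates modulo the kernel of $E$, under a local error bound; this is the route used for the relaxed multi-marginal case in \cite{haasler2021multi}. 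To apply it, we need iterates that remain bounded modulo the kernel of $E$. We will obtain this from feasibility: we assume Slater-type strict feasibility (a discretized analogue of the construction in Proposition~\ref{prop:existence}) together with strict positivity of $\bK^{(\bp)}$ on the discretized grid, which gives level-set compactness of $-G$. Finally, the primal iterate is a smooth function of the dual iterate, $\pi^{(\bp)}=\bK^{(\bp)}\exp(E\lambda/\varepsilon)$, so the linear rate transfers to $\pi^{(\bp)}$ and to the marginal violation. Its limit satisfies the KKT conditions and therefore equals the unique optimizer.
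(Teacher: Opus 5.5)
Your proposal is correct and follows the same route as the paper: each update is an exact block maximization of the concave entropic dual, so the scheme is block-coordinate ascent, and Luo--Tseng's theorem gives the linear rate. Your additions (using $m_v\oslash\bw_v$ rather than $m_v$, which applies equally to the source/sink updates, fixing a Gauss--Seidel order, and a Slater-type condition so that a dual optimizer exists) make explicit what the paper's three-line proof takes for granted.

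One sign slip needs fixing. With $\bw_v=\exp(\gamma_v/\varepsilon)$ and $\gamma_v\le 0$, the capacity term in $G$ must read $+\sum_v\langle\gamma_v,r(\cdot,v)\rangle$. With your minus sign, $G$ is unbounded above as $\gamma_v\to-\infty$, and the clipped update would not be its block maximizer.
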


\begin{proof}
The entropic formulation is strictly convex in $\{\pi^{(\bp)}\}$, hence the optimizer is unique and strong duality holds. Each multiplicative update in Algorithm \ref{alg:sinkhorn_graph} enforces one set of marginals or capacity constraints in closed-form, which is the exact maximizer of the dual over that coordinate block. The algorithm is therefore exact block coordinate ascent \cite{luo1992convergence} on a smooth concave function. Standard results for such schemes give monotone ascent and a global linear rate \cite[Theorem 2.1]{luo1992convergence}.
\end{proof}

\begin{remark}[\bf \textit{Stopping criteria on Algorithm \ref{alg:sinkhorn_graph}}]
For the stopping condition of Algorithm \ref{alg:sinkhorn_graph}, we monitor marginal violations and capacity violations, i.e.,
$E_0=\sum_i\|m^0_i-\mu^0(\cdot,v^-_i)\|_1$,
$E_{\calT}=\sum_j\|m^{\calT}_j-\mu^{\calT}(\cdot,v^+_j)\|_1$, and
$V=\sum_\ell\|(m_\ell-r_\ell)_+\|_1$. 
Algorithm \ref{alg:sinkhorn_graph} terminates when $E_0, E_{\calT}, V\le\tau$, with $\tau$ being the precision tolerance. 
\end{remark}

\begin{remark}[\bf \textit{Coupled DA}]
For the coupled DA case, where the boundary marginal is prescribed jointly in $(t_0,t_{\mathcal T})$, the update in Algorithm~\ref{alg:sinkhorn_graph} is modified as follows. Define 
$$m^{0,\mathcal T}(t_0,t_{\mathcal T})
:= \sum_{\bp}\int \pi^{(\bp)}(t_0,\{t_\ell\},t_{\mathcal T})\,d\{t_\ell\},$$ and set the boundary ratio $\Lambda(t_0,t_{\mathcal T}) := \mu^{0,\mathcal T}(t_0,t_{\mathcal T}) \oslash m^{0,\mathcal T}(t_0,t_{\mathcal T})$. The direct projection step is then applied path-wise via 
$$\pi^{(\bp)} \;\leftarrow\; \Lambda \odot \pi^{(\bp)}$$
for all $\bp$, while the interior capacity projections remain unchanged. Similar computations of projections in multimarginal problems with joint marginals constraints were also considered in  \cite{haasler2020optimal,haasler2024scalable}.

\end{remark}

\section{Numerical examples}\label{sec:num}
We now illustrate the two DA regimes and the path-wise formulation on simple test cases. The focus is on how schedules adapt to DA specifications and nodal capacity limits, rather than on numerical performance. Section~\ref{sec:indep-num} considers independent DA constraints on a line graph. Section~\ref{sec:coupled-num} turns to coupled DA constraints on the same geometry. Section~\ref{sec:path-network-num} then studies transport on a small path-specified network, while Section~\ref{sec:convergence-num} reports the convergence of the path-wise Sinkhorn solver.

\subsection{Independent DA constraints on line graph}\label{sec:indep-num}

First, we consider the independent DA setting on a single path. The boundary time marginals at the source $v_0$ and sink $v_{\mathcal T}$ are fixed Gaussian mixtures. The crossing-time marginal at the interior node $v_1$ is discretized on a uniform grid and is subject to a per-time-slice capacity. As in Fig. \ref{fig:independent_DA}, we use the discretization size $n_t=100$ and a capacity $r=2/100$. The optimization enforces the three marginal constraints together with the capacity bound.

\begin{figure}[htb!]
\centering
\includegraphics[width=.75\linewidth]{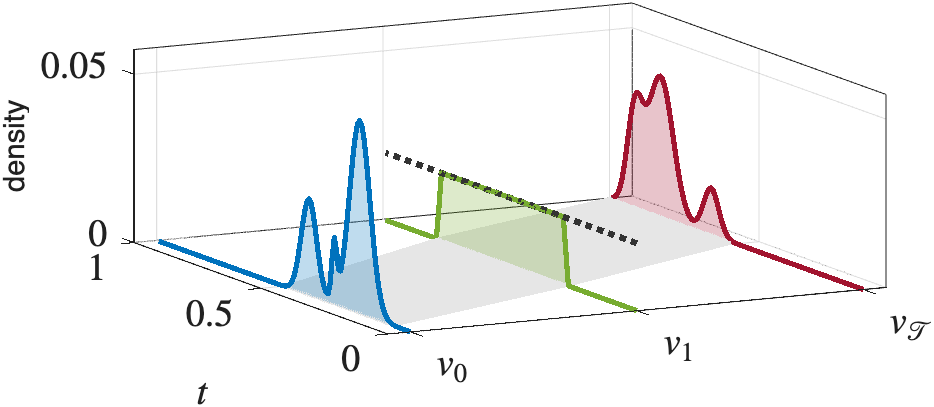}
\caption{\emph{Independent DA on line graph with single intermediate node.} The three one-dimensional time profiles are placed on $v_0$, $v_1$, and $v_{\mathcal T}$. Blue shows the departure density at $v_0$. Green shows the crossing-time density at $v_1$. Red shows the arrival density at $v_{\mathcal T}$. The dotted line at $v_1$ marks the per-time-slice capacity $c=2$. Light gray guides indicate the aggregated couplings along $(v_0,v_1)$ and $(v_1,v_{\mathcal T})$. The plot highlights scheduling: mass is metered through $v_1$ to respect capacity, and node-wise marginals satisfy conservation.}
\label{fig:independent_DA}
\end{figure}

The panel in Fig.~\ref{fig:independent_DA_uni} depicts the endpoint coupling between $v_0$ and $v_{\mathcal T}$ as a smooth surface over $(t_0,t_{\mathcal T})$. The blue and red boundary ridges reproduce the prescribed departure and arrival time marginals at $v_0$ and $v_{\mathcal T}$. The green curve along the far edge encodes the crossing-time marginal at the interior node $v_1$. Dominant triplets $(t_0,t_1,t_{\mathcal T})$ are shown as colored markers, with color indicating the crossing time $t_1$. In one dimension with a flux limit, the separable-in-time objective satisfies a generalized Monge condition, so optimal couplings through $t_1$ concentrate on the graph of a monotone map. The support therefore collapses to a single smooth strand rather than a sheet, and slices align with a function graph--consistent with monotone rearrangement and uniqueness of the optimizer.

\begin{figure}[htb!]
\centering
\includegraphics[width=.75\linewidth]{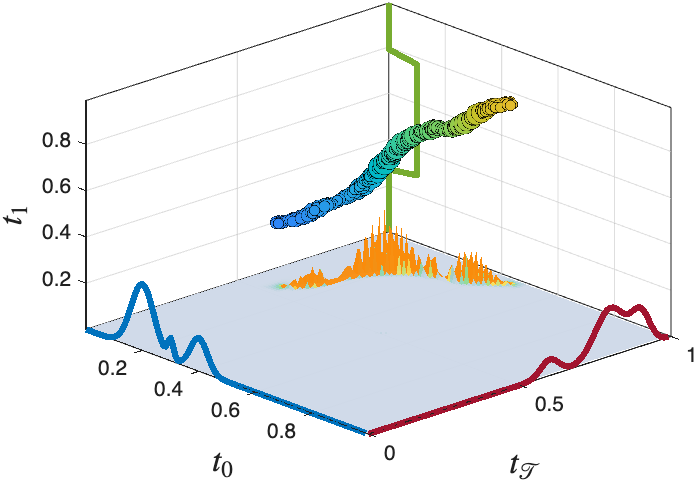}
\caption{\emph{Endpoint coupling and triplet structure for case in Fig.~\ref{fig:independent_DA}.} The surface shows coupling over $(t_0,t_{\mathcal T})$ with blue and red boundary ridges equal to the source and sink marginals. The green curve along the far edge plots the crossing-time marginal at $v_1$. Colored markers indicate dominant triplets $(t_0,t_1,t_{\mathcal T})$ (color = $t_1$). The support collapses to a single smooth strand, in agreement with the monotone-map structure and uniqueness in one-dimensional flux-limited setting.}
\label{fig:independent_DA_uni}
\end{figure}

Next, we extend the independent DA setting to a multistage path with five interior crossings. The joint coupling is supported on tuples $(t_0,t_1,t_2,t_3,t_4,t_5,t_6)$ with $t_0<t_1<\cdots<t_5<t_6$. The departure and arrival time marginals at $v_0$ and $v_{\mathcal T}$ are fixed, while each interior node $v_k$ carries a time-varying capacity profile. Fig.~\ref{fig:independent_DA_line} places the seven one-dimensional time marginals on the rails $v_0,v_1,\dots,v_5,v_{\mathcal T}$ with blue at $v_0$, green at $v_1-v_5$ with dotted capacity traces, and red at $v_{\mathcal T}$. The plot highlights scheduling along the chain: mass is paced across stages so conservation holds at each rail, and every interior rail respects its time-varying capacity, while the boundary marginals are preserved.

\begin{figure}[htb!]
\centering
\includegraphics[width=0.8\linewidth]{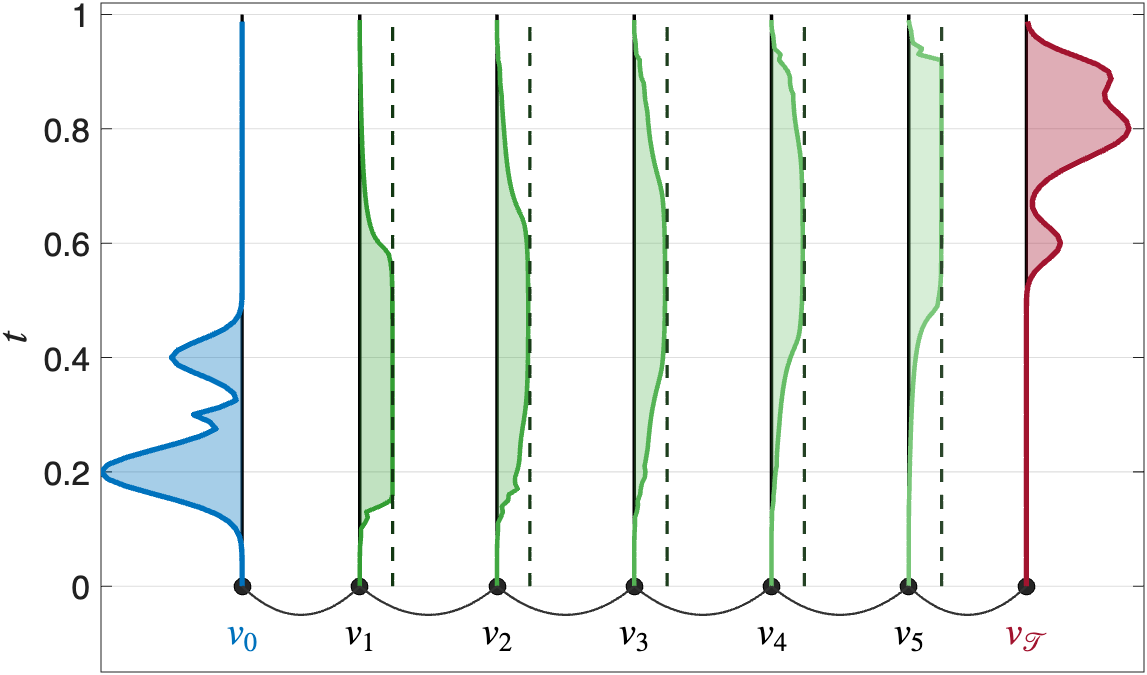}
\caption{\emph{Independent DA on a line graph.} Seven rails represent $v_0$ (source), five interior crossings $v_1$-$v_5$, and $v_{\mathcal T}$ (arrival). Blue is the source, green the interior crossing-time rails with per-stage capacity markers, and red the arrival. The optimizer reallocates mass across stages to satisfy capacity at each interior rail while preserving the source and arrival marginals.}
\label{fig:independent_DA_line}
\end{figure}

\subsection{Coupled DA constraints on line graph}\label{sec:coupled-num}
Now, we fix the DA coupling and optimize only the crossing-time schedule under a per-time-slice capacity. The structure mirrors the independent DA case at the crossing node, but the boundary coupling is no longer a variable. With pairs $(v_0,v_{\mathcal T})$ fixed, the schedule routes mass through time without changing who is matched to whom. The solution on the line is unique. Monotone order need not hold: capacity can delay some early departures and advance some late ones, so temporal order may flip even as feasibility, conservation, and the prescribed coupling are enforced. Fig.~\ref{fig:placeholder} shows these reversals (smoothness as in Remark~\ref{remark:smooth}).

\begin{figure}[htb!]
\centering
\includegraphics[width=0.65\linewidth]{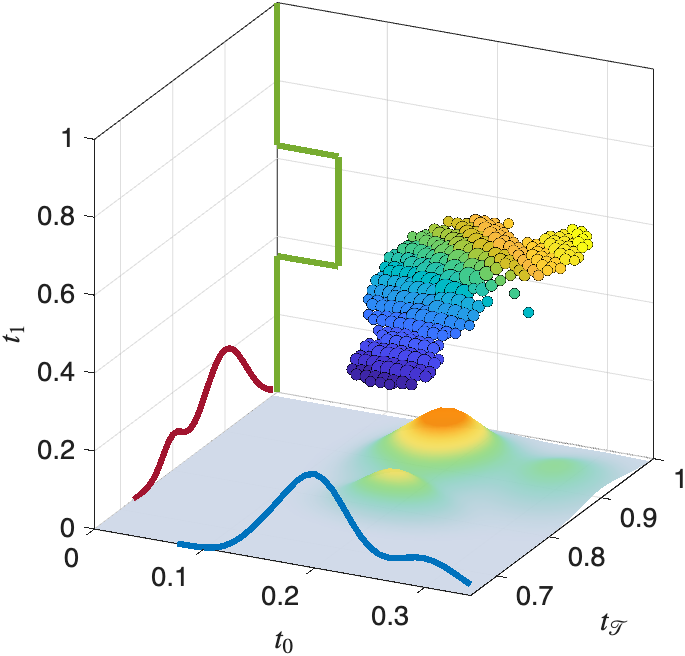}
\caption{\emph{Coupled DA with fixed DA coupling.} The surface over $(t_0,t_{\mathcal T})$ is the prescribed coupling between $v_0$ and $v_{\mathcal T}$. Blue and red ridges reproduce the boundary marginals. The green curve shows the crossing-time profile (extended by zero outside its window). Colored markers display dominant triplets $(t_0,t_1,t_{\mathcal T})$ and reveal order reversals: the unique schedule respects capacity but does not preserve monotone time order.}
    \label{fig:placeholder}
\end{figure}

\subsection{Transportation on path-specified networks}\label{sec:path-network-num}

Finally, we restrict the feasible set to three admissible paths that advance one level at a time and meet at shared nodes, with capacities adding within every common node. In the instance shown in Fig. \ref{fig:path_network}, the routes are $\bp_1 = \{v_0,v_2, v_3, v_4, v_6, v_{\mathcal T}\}$, $\bp_2 = \{v_0, v_1, v_3, v_4, v_5, v_{\mathcal T}\}$, and $\bp_3 = \{v_0, v_2, v_3, v_4, v_5, v_{\mathcal T}\}$. Flow is confined to these paths and may split or merge at shared nodes. Every interior node enforces a uniform per-time capacity $r_v(t)\le \bar c$ with $\bar c=1.4$, so the green ribbons are clipped at the common dashed cap while the boundary profiles at $v_0$ and $v_{\mathcal T}$ remain fixed.

\begin{figure}[htb!]
\centering
\includegraphics[width=0.75\linewidth]{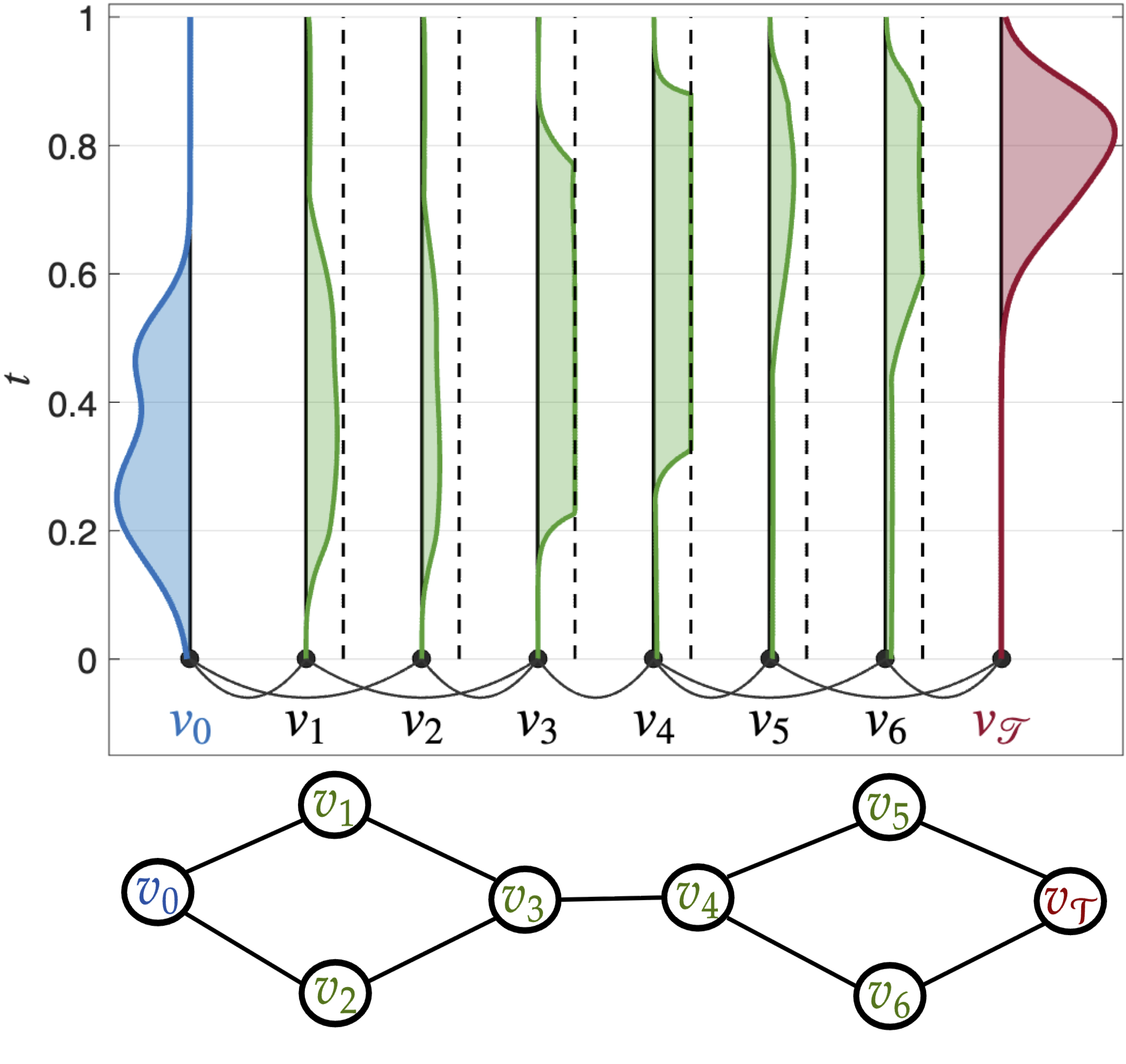}
\caption{\emph{Path-specified network.} Three admissible paths connect $v_0$ to $v_{\mathcal T}$ and intersect only at nodes. Blue and red rails display the departure and arrival time profiles. Green vertical ribbons at nodes on the paths $\bp_1$, $\bp_2$, and $\bp_3$, showing crossing-time profiles with support on $[0,1]$. The visualization highlights path-wise scheduling with splitting and merging at shared junctions under uniform node capacities.}
\label{fig:path_network}
\end{figure}

\black

\subsection{Convergence at linear rate}\label{sec:convergence-num}
The boundary temporal marginals $\mu_0$ and $\mu_{\mathcal T}$ are fixed Gaussian mixtures concentrated near the start and end of the horizon. Each interior node uses a single time-profile multiplier $w_v(t)$ shared across all paths. At each iteration, we propagate forward and backward path messages, aggregate $m_0$, $m_{\mathcal T}$, $m_v$, rescale $u$ and $v$ to match $\mu_0$ and $\mu_{\mathcal T}$, and clip $w_v$ to the capacity profile. Fig. \ref{fig:convergence} shows that the marginal errors $\|\hat{\mu}_0-\mu_0\|_1$ and $\|\hat{\mu}_{\mathcal T}-\mu_{\mathcal T}\|_1$ decay, indicating a linear convergence rate.

\begin{figure}[htb!]
\centering
\includegraphics[width=.65\linewidth]{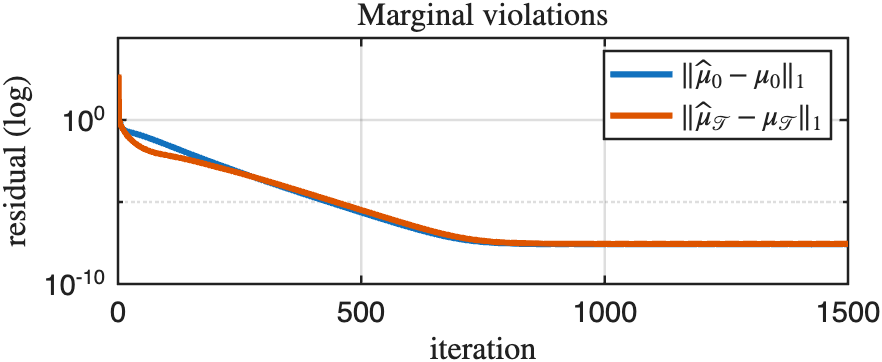}
\caption{Marginal violations for the path-wise Sinkhorn solver on the three-path grid. The plot shows the log-scale $\ell_1$ errors $\|\hat{\mu}_0-\mu_0\|_1$ and $\|\hat{\mu}_{\mathcal T}-\mu_{\mathcal T}\|_1$ over $1500$ iterations, where $\hat{\mu}_0$ and $\hat{\mu}_{\mathcal T}$ are the estimated departure and arrival marginals. The curves are approximately straight lines on the log scale, consistent with a linear convergence rate.}
\label{fig:convergence}
\end{figure}

\section{Conclusion}\label{sec:conclusion}

In this paper, we developed a comprehensive framework for OT over networks subject to DA constraints, extending classical formulations by incorporating temporal specifications and nodal flux limitations at sources and sinks. In particular, two types of DA constraints were considered: independent constraints, where departure and arrival rates are prescribed separately, and coupled constraints, where each particle’s transportation timespan is explicitly specified. We established feasibility, existence, and uniqueness in both settings. The independent DA formulation aligns naturally with a multi-marginal OT structure, while the coupled case gives rise to an unequal-dimensional OT problem, reflecting joint timing constraints. To address the arising optimization challenges from our problem, we began by analyzing transportation over line graphs, where we derived analytical conditions for feasibility and uniqueness based on monotonicity and the generalized Monge condition. We also introduced a node aggregation strategy for path reduction. We proposed efficient numerical solving for the resulting high-dimensional problem by considering its entropic regularization, and develop customized Sinkhorn-type algorithms for both the independent and coupled departure--arrival constricted setting. The scalable algorithm improved applicability for large-scale network optimization. Future work may include incorporating origin–destination (OD) structure together with DA profiles directly at the path level. It may also explore refined graph reductions where large networks are approximated by DAG skeletons that preserve path-wise timing, and extend the framework to richer physical constraints such as speed limits, buffer sizes, and fundamental-diagram-type flow bounds, aiming toward deployable scheduling tools for specified systems.

\section*{Acknowledgment}
This research has been supported in part by the Swedish Research Council Distinguished Professor Grant 2017-01078, Knut and Alice Wallenberg Foundation Wallenberg Scholar Grant, and the Swedish Research Council (VR) under grant 2020-03454, KTH Digital Futures.

\bibliographystyle{plain}  
\bibliography{references}

\end{document}